\documentclass[12pt,a4paper]{article}
\usepackage{fontspec}
\usepackage[english]{babel}
\usepackage{amsmath,amssymb,amsthm,mathtools}
\usepackage{dsfont}
\numberwithin{equation}{section}
\usepackage{geometry}
\usepackage{booktabs,array}
\usepackage{xcolor}
\usepackage{csquotes}
\usepackage{tikz}
\usetikzlibrary{arrows.meta,positioning,calc}
\usepackage[
 backend=biber,
 style=numeric,
 sorting=none,
 giveninits=true,
 maxbibnames=99
]{biblatex}
\DeclareNameAlias{author}{given-family}
\DeclareNameAlias{editor}{given-family}
\DeclareFieldFormat[article]{title}{#1}
\DeclareFieldFormat[article]{number}{\mkbibparens{#1}}
\renewbibmacro*{volume+number+eid}{%
 \printfield{volume}%
 \setunit*{\addspace}%
 \printfield{number}%
 \setunit{\bibeidpunct}%
 \printfield{eid}}
\renewbibmacro{in:}{}
\defbibheading{bibliography}[\refname]{\section*{\centering #1}}
\AtBeginBibliography{\sloppy}
\usepackage{hyperref}
\hypersetup{
 pdftitle={The Zaporozhets--Tarasov Inequality for an Arbitrary Planar Convex Body},
 pdfsubject={A complete proof of the strict mean-distance inequality},
 pdfkeywords={convex body, mean distance, boundary, Gini mean difference},
 pdfauthor={Maksim Kukushkin}
}
\newif\ifprintversion
\ifprintversion
 \hypersetup{hidelinks}
\else
 \hypersetup{colorlinks=true,linkcolor=blue!55!black,urlcolor=blue!55!black}
\fi
\usepackage{microtype}
\usepackage{enumitem}
\setlist{nosep}

\newtheorem{theorem}{Theorem}
\newtheorem{lemma}[theorem]{Lemma}
\newtheorem{proposition}[theorem]{Proposition}
\theoremstyle{definition}
\newtheorem{definition}{Definition}
\theoremstyle{remark}
\newtheorem{remark}{Remark}
\theoremstyle{plain}

\newcommand{\E}{\mathbb E}
\newcommand{\R}{\mathbb R}
\newcommand{\Var}{\operatorname{Var}}
\newcommand{\co}{\operatorname{co}}
\newcommand{\dd}{\,\mathrm{d}}

\let\leq\leqslant
\let\geq\geqslant
\newcommand*{\eq}{\mathrel{\overset{\mathrm{def}}{=}}}
\newcommand{\step}[1]{\par\medskip\noindent\textbf{#1}\par\smallskip}

\title{The Zaporozhets--Tarasov Inequality\\
for an Arbitrary Planar Convex Body}
\author{Maksim Kukushkin\\[-0.1em]
\small Faculty of Mathematics and Computer Science\\[-0.15em]
\small Saint Petersburg State University, Saint Petersburg, Russia}
\date{Russian original: 26 July 2026\\
English translation: 13 August 2026}

\begin{document}
\maketitle

\begin{abstract}
We prove that the mean distance between two independent uniformly distributed
points in an arbitrary planar convex body is strictly smaller than the mean
distance between two independent uniformly distributed points on its boundary.
Equality is impossible, although the difference between these two mean distances
tends to zero along a sequence of thin rectangles. The proof reduces the problem
to a one-dimensional comparison of two Gini mean differences. The main new
ingredient is a moment lemma for a random pair
$(\varepsilon,R)\in\{-1,1\}\times[0,1]$. The only finite algebraic part of the
proof is given by exact rational certificates in the Bernstein basis; the
verification script and all 6492 coefficients are available in the supplementary
materials.
\end{abstract}

\medskip
\begin{center}
\begin{minipage}{0.88\textwidth}\small
\noindent\textbf{Keywords:} convex body, mean distance,
boundary, Gini mean difference, Bernstein basis.
\end{minipage}
\end{center}
\section{Introduction}\label{sec:introduction}

The Zaporozhets--Tarasov conjecture compares mean distances for the uniform
distributions inside a planar convex body and on its boundary. The principal
difficulty is that these two measures generally have different centroids and
need not be centrally symmetric, so a direct pointwise comparison of distances
is unavailable.

According to A.~S.~Lotnikov~\cite{Lotnikov2025}, the conjecture was proposed
by D.~N.~Zaporozhets and A.~S.~Tarasov in 2019. The earliest published
formulation that we found appears in a paper by
A.~S.~Tokmachev~\cite{Tokmachev2022}. In both sources the conjecture is stated
with a strict inequality. We prove this strict form below for every planar
convex body. Equality is impossible, although for thin rectangles the
difference between the two mean distances tends to zero.

One of the initial approaches was based on normalizing mean distances by the
perimeter. For the uniform measure in the interior, Bonnet, Gusakova, Thäle,
and Zaporozhets~\cite{BonnetEtAl2021} proved, in particular, the sharp planar
bounds
$7/60<\E|I_1-I_2|/P(K)<1/6$. Therefore, for the initial hypothesis it would be sufficient
to establish the lower bound $\E|B_1-B_2|/P(K)>1/6$.

A.~S.~Tokmachev~\cite{Tokmachev2022} proved the complementary upper bound
$\E|B_1-B_2|/P(K)\leq 2/\pi^2$ using Fourier series and approximation of
convex bodies; this method did not yield the required lower bound.

A.~S.~Lotnikov~\cite{Lotnikov2025} proved the conjecture for centrally symmetric
planar bodies, simultaneously for all moments of the distance of order $p\geq1$.
His proof uses a stochastic comparison of one-dimensional projections; that
paper also gives results for sufficiently high moments in more general bodies.

Later A.~S.~Tokmachev~\cite{Tokmachev2026} established a stronger comparison
for separately convex functions of several points under the additional assumption
that the centroids of the interior and boundary uniform measures coincide. His
method is based on the convex order of one-dimensional projections.

In the present paper we assume neither central symmetry nor coincidence of
centroids, but consider only the first moment of the distance between two points.
The method does not require a stochastic comparison of individual projections:
the projections of area and perimeter are described by a pair of monotone
densities, reducing the problem to a comparison of Gini mean differences, a
\emph{moment lemma}, and finitely many rational \emph{Bernstein certificates}.

\subsection{Notations and conventions}

\begin{definition}[monotonicity convention]\label{def:monotonicity}
A function $w$ is called \emph{increasing} if $x<y$ implies
$w(x)\leq w(y)$, and \emph{decreasing} if $x<y$ implies $w(x)\geq w(y)$.
If the corresponding inequality is always strict, we use the terms
\enquote{strictly increasing} and \enquote{strictly decreasing}.
\end{definition}

Differentials are typeset in upright font: $\dd x=\,\mathrm{d}x$; the symbol
$\eq$ denotes equality by definition.

The principal notation and local reuse of symbols are summarized below.
\begin{center}
\small
\begin{tabular}{@{}p{31mm}p{108mm}@{}}
\toprule
notation & meaning\\
\midrule
$K,|K|,P(K)$ & a convex body, its area, and its perimeter\\
$I,B$ & uniform random points in $K$ and on $\partial K$, respectively\\
$D(\sigma)$ & Gini mean difference; the precise definition is given in
Section~\ref{sec:one-dimensional}\\
$u(t),v(t),h(t)$ & the upper and lower boundaries of the body and their difference;
in the appendix, $u$ locally denotes $\overline\E[\gamma(R)]$.
The symbol $\overline\E$ is introduced in~\eqref{eq:bar-expectation}\\
$q(t)$ & the unnormalized linear density of boundary length; in the appendix,
$q=a-e$ denotes the mean of a conditional distribution\\
$e_\theta$ & a unit projection direction; in the appendix, $e$ denotes
$\E(\varepsilon R)$\\
$\varphi_0,\varphi_1$ & orthonormal functions in Bessel's inequality\\
$\alpha,\beta$ & the Stieltjes measures $-\dd f$ and $\dd g$; in
Appendix~\ref{app:atomic-check}, $\alpha$ is a multi-index and
$\beta_k$ are Bernstein coefficients\\
$t$ & a projection parameter; in the appendix, the point of tangency of the right chord\\
$x,y$ & variables of integration or, locally in the appendix, endpoints of a chord\\
\bottomrule
\end{tabular}
\end{center}

\subsection{Structure of the paper}

The logical dependencies are shown in Figure~\ref{fig:proof-map}. The
geometric reduction is proved in Section~\ref{sec:geometry}, the
one-dimensional lemma in Section~\ref{sec:one-dimensional}, and the technical
proof of the moment lemma is deferred to Appendix~\ref{app:moment-proof}.
Appendix~\ref{app:supplement} proves standard facts about the countability of
maximal support segments, derives the projection densities of area and boundary
length, proves the classical formula for the Gini mean difference, and describes
the reproducibility materials. The main part concludes with open questions in
Section~\ref{sec:conclusion}.

\begin{figure}[ht]
\centering
\begin{tikzpicture}[
 node distance=5mm,
 box/.style={draw,rounded corners,align=center,inner sep=5pt},
 branch/.style={draw,rounded corners,align=center,inner sep=4pt,text width=55mm},
 arr/.style={-{Stealth[length=2mm]},thick}
]
\node[box] (thm) {Theorem~\ref{thm:main}};
\node[box,below=of thm] (red) {Proposition~\ref{prop:projection-reduction}: reduction to projections};
\node[box,below=of red] (one) {One-dimensional Lemma~\ref{lem:one-dimensional}};
\node[box,below=of one] (mom) {Moment Lemma~\ref{lem:moment}};
\node[branch,below=7mm of mom,xshift=-31mm] (large) {$a\geq\tfrac12$: Bernstein certificate of degree $(3,3)$};
\node[branch,below=7mm of mom,xshift=31mm] (small) {$0<a<\tfrac12$: convex envelope and five atomic certificates};
\draw[arr] (red)--(thm);
\draw[arr] (one)--(red);
\draw[arr] (mom)--(one);
\draw[arr] (large)--(mom);
\draw[arr] (small)--(mom);
\end{tikzpicture}
\caption{Logical structure of the proof. Each arrow points from a statement
being used to its consequence.}\label{fig:proof-map}
\end{figure}

\section{Statement of the result}\label{sec:statement}

Throughout, a convex body means a compact convex set $K\subset\R^2$ with
nonempty interior. We write $|K|$ for its area and $P(K)$ for its perimeter.
Let $I$ be uniformly distributed in $K$ with respect to Lebesgue measure, and
let $B$ be uniformly distributed on $\partial K$ with respect to arc length.
Subscripts denote independent copies.

\begin{theorem}[Zaporozhets--Tarasov conjecture]\label{thm:main}
For every planar convex body $K$,
\[
   \E\,|I_1-I_2|<\E\,|B_1-B_2|.
\]
\end{theorem}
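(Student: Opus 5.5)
The plan is to reduce the planar inequality to a family of one-dimensional inequalities via projections, following the structure in Figure~\ref{fig:proof-map}. The starting point is the Crofton-type identity $|x|=\tfrac14\int_0^{2\pi}|\langle x,e_\theta\rangle|\dd\theta$ for $x\in\R^2$. Averaging it over independent pairs gives
\[
\E|I_1-I_2|=\tfrac14\int_0^{2\pi}D(\sigma_\theta^I)\dd\theta,
\qquad
\E|B_1-B_2|=\tfrac14\int_0^{2\pi}D(\sigma_\theta^B)\dd\theta .
\]
Here $\sigma_\theta^I,\sigma_\theta^B$ are the laws of $\langle I,e_\theta\rangle$ and $\langle B,e_\theta\rangle$, and $D$ is the Gini mean difference. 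It therefore suffices to prove $D(\sigma_\theta^I)\le D(\sigma_\theta^B)$ for every direction, with strict inequality on a set of directions of positive measure. This is the content of Proposition~\ref{prop:projection-reduction}.

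Next, fix a direction and rotate so that it is the horizontal axis. Describe $K$ as $\{v(t)\le y\le u(t)\}$ over an interval $[t_0,t_1]$, with $u$ concave, $v$ convex, and $h=u-v$ concave. The projected area density is $h(t)/|K|$. The projected boundary density is $q(t)/P(K)$, where
\[
q(t)=\sqrt{1+u'(t)^2}+\sqrt{1+v'(t)^2}
\]
almost everywhere, plus atoms at $t_0$ and $t_1$ coming from any vertical support segments (there are only countably many directions with such segments). The key structural fact is that $q$ decomposes into a decreasing part and an increasing part that are tied to $h$. Writing $q=f+g$ with $f$ decreasing and $g$ increasing, the concave function $h$ is controlled by $f$ and $g$ through the Stieltjes measures $\alpha=-\dd f$ and $\beta=\dd g$. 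This should be the "pair of monotone densities" the paper refers to.

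I would then use the representation $D(\sigma)=2\int F(1-F)$ to write both Gini differences as double integrals against $\alpha$ and $\beta$. The statement $D(\text{area})\le D(\text{perimeter})$ becomes an inequality for an elementary kernel, integrated against arbitrary nonnegative measures in $(\alpha,\beta)$. After normalization this reduces to a moment inequality for a random pair $(\varepsilon,R)\in\{-1,1\}\times[0,1]$, where $\varepsilon$ records whether the mass comes from the decreasing or the increasing side and $R$ records the relative position. That is Lemma~\ref{lem:moment}, which feeds Lemma~\ref{lem:one-dimensional}.

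The main obstacle is the moment lemma itself, because the inequality is sharp in the thin-rectangle limit. I would first reduce it, by convexity or linearity in the law of $(\varepsilon,R)$, to extremal distributions with few atoms. I would then split according to a parameter $a$, for instance the mass or mean on one side. For $a\ge\tfrac12$ I expect a direct polynomial inequality that can be certified by positive Bernstein coefficients on a box. For $a<\tfrac12$ I would pass to a convex envelope and check finitely many atomic configurations, each with a rational Bernstein certificate.

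Strictness comes from the equality analysis. Equality in the moment lemma forces a degenerate configuration, the thin-segment limit, which no body with nonempty interior attains. Since each projection inequality is continuous in $\theta$, strict inequality in one direction gives strict inequality on an open set of directions, and hence in the integrated inequality of Theorem~\ref{thm:main}.
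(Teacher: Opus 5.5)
Your outline follows the paper's architecture, but the strictness argument does not work as stated. You attribute strictness to an equality analysis of Lemma~\ref{lem:moment}, yet equality there is actually attained. Take $R=1$ on $\{\varepsilon=1\}$ and $R=0$ on $\{\varepsilon=-1\}$. Then $a=\tfrac12$, $b=0$, $\Var(\varepsilon R)=\Var(\gamma(R))=\tfrac14$, and both sides of \eqref{eq:moment-main} equal $\tfrac1{12}$. This configuration is degenerate, since it needs the mixing law of $f$ concentrated at $s=0$. But to conclude as you do, you would have to classify all equality cases and show that none is realized by a body. That means tracing equality back through the convex-envelope reduction, the replacement identity, and certificates containing many zero Bernstein coefficients. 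Nothing in that machinery yields such a classification, and the extremal case above is not a thin-segment limit. The paper needs none of this. The interior deficit is computed exactly as $E_{\mathrm{in}}=\frac{b}{6a}+Q$, where $Q=\int_0^1(N(x)-x)^2\dd x$ and $N$ is the distribution function of $H/d$, while $E_{\mathrm{bd}}\le\frac{b}{6a}$. Moreover $Q>0$, because $N(x)\equiv x$ would force $H\equiv d>0$, contradicting $H(0)=H(1)=0$. This gives strict inequality in every non-exceptional direction, so no continuity in $\theta$ is needed.

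The middle of your plan is asserted rather than constructed. A splitting of $q$ into a decreasing and an increasing part is far from unique and by itself says nothing about $h$. The essential choice is $f\propto q+h'$ and $g\propto q-h'$, which are monotone because $s\mapsto\sqrt{\ell^2+s^2}\pm s$ is monotone. This choice is what gives $\rho_{\mathrm{bd}}=(f+g)/2$ and $\rho_{\mathrm{in}}=H/d$ with $H=F-G$. Nor does the moment lemma drop out of ``an elementary kernel integrated against $(\alpha,\beta)$''. Both Gini differences are quadratic in $(\alpha,\beta)$, and the interior one is also divided by $d^2$, where $d=a$ depends on the measures. A statement about finitely many moments of $(\varepsilon,R)$ appears only after lossy steps you do not mention:
\begin{enumerate}
\item writing $f,g$ as mixtures of $U[0,s]$ and $U[s,1]$ and using the law of total variance to get $D_{\mathrm{bd}}=\frac{1-a}3+2\mathcal V$;
\item applying Bessel's inequality to the centred component distribution functions against $1$ and $\sqrt{12}(x-\tfrac12)$, which is the source of $\frac14\Var(\varepsilon R)+\frac1{12}\Var(\gamma(R))$;
\item on the interior side, discarding $Q\ge0$.
\end{enumerate}
Without these steps, nothing in your plan explains why the one-dimensional comparison reduces to Lemma~\ref{lem:moment}.
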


\begin{remark}[equality and limiting sharpness]\label{rem:strict-sharpness}
Equality in the theorem is impossible. Nevertheless, the strict inequality
cannot be strengthened to include a universal multiplicative gap. Indeed, let
$K_\delta=[0,1]\times[0,\delta]$, where $\delta\downarrow0$. An interior
point can be written as $I_\delta=(U,\delta V)$, where $U,V\in[0,1]$ are
independent and uniform. By the dominated convergence theorem,
\[
 \E|I_{\delta,1}-I_{\delta,2}|
 \longrightarrow \E|U_1-U_2|=\frac13.
\]
For a boundary point, the probability of lying on one of the two horizontal
sides is $1/(1+\delta)\to1$. Conditional on this pair of sides, its first
coordinate is uniform on $[0,1]$, while the difference of the second coordinates
is at most $\delta$. The probability that at least one of two independent
boundary points lies on a vertical side is at most $2\delta/(1+\delta)$ and
tends to zero. On the complement of this event, the Euclidean distance differs
from the absolute difference of the first coordinates by at most $\delta$, and
all distances are bounded for $0<\delta\leq1$. Hence
\[
 \E|B_{\delta,1}-B_{\delta,2}|
 \longrightarrow \E|U_1-U_2|=\frac13.
\]
Thus, both sides have the common limit $1/3$ as the rectangles collapse to a
line segment, and the ratio of the boundary mean to the interior mean tends to
$1$. A homothety may additionally normalize the diameter of every rectangle to
one. The degenerate limit itself is not a convex body in the sense used here.
\end{remark}

\subsection*{Idea of the proof}

First project the body onto a line in an arbitrary direction for which the
projection has positive length, and then normalize the projection coordinate to
$[0,1]$. It suffices to prove the inequality for the projected area and boundary
length distributions. These distributions turn out to satisfy a nontrivial
relation. More precisely, one can explicitly construct a decreasing probability
density $f$ and an increasing probability density $g$ on $[0,1]$, with
distribution functions $F$ and $G$, respectively, such that the projected
boundary-length density is $(f+g)/2$ and the projected area density is $H/d$,
where
\[
 H=F-G,\qquad d=\int_0^1 H(t)\,\dd t.
\]
Thus, the entire geometric problem reduces to a one-dimensional comparison:
\[
 K\ \longrightarrow\
 \left(\frac{H}{d},\frac{f+g}{2}\right)
 \ \longrightarrow\ \text{comparison of mean distances between projections}.
\]
The inequalities in all directions are then integrated using the identity
\[
 |z|=\frac12\int_0^\pi|\langle z,e_\theta\rangle|\dd\theta.
\]

The key one-dimensional step uses the representation of the Gini mean
difference in terms of the distribution function. It converts the problem into
a comparison of two quadratic \emph{deficits} in the sense of
Definition~\ref{def:gini-deficit}. The deficit of the boundary projection is
bounded from above using Bessel's inequality and the moment lemma, while the
deficit of the area projection is bounded from below by an exact integration by
parts. This is the only point where a nontrivial moment estimate is needed; its
technical proof is deferred to the appendix. The exact formula for the interior
deficit contains a positive quadratic remainder. It cannot vanish because
$H(0)=H(1)=0$ and $d>0$; this yields the strict inequality.

\section{Geometric reduction to a one-dimensional problem}\label{sec:geometry}

We first isolate the geometric part of the proof. It explains why precisely the
densities $H/d$ and $(f+g)/2$ occur in the one-dimensional lemma.

\begin{proposition}[reduction to projections]\label{prop:projection-reduction}
If Lemma~\ref{lem:one-dimensional} holds, then so does
Theorem~\ref{thm:main}.
\end{proposition}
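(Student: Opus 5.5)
The plan is to fix a direction $e_\theta$ and compare the mean absolute differences of the projections $\langle I,e_\theta\rangle$ and $\langle B,e_\theta\rangle$. Lemma~\ref{lem:one-dimensional} is assumed in the form suggested by the proof idea: for a decreasing probability density $f$ and an increasing probability density $g$ on $[0,1]$, the Gini mean difference of the density $H/d$ is strictly smaller than that of $(f+g)/2$. The identity
\[
 |z|=\frac12\int_0^\pi|\langle z,e_\theta\rangle|\dd\theta
\]
together with Fubini then gives
\[
 \E|I_1-I_2|=\frac12\int_0^\pi \E|\langle I_1-I_2,e_\theta\rangle|\dd\theta ,
\]
and the analogous identity for $B$. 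Every direction gives a projection of positive length, since $K$ has nonempty interior. So the strict inequality for each $\theta$ integrates to the strict inequality of Theorem~\ref{thm:main}. Both sides scale linearly under an affine change of the projection coordinate, so I may normalize the projection to $[0,1]$.

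The main step is constructing $f$ and $g$ for a fixed direction. After a rotation, write $K=\{(t,s):t\in[0,1],\ v(t)\leq s\leq u(t)\}$, where $u$ is concave, $v$ is convex, and $h=u-v\geq 0$ with $h>0$ on $(0,1)$. The projected area density is $h/|K|$. For the boundary, the unnormalized linear density of length is
\[
 q(t)=\sqrt{1+u'(t)^2}+\sqrt{1+v'(t)^2},
\]
plus atoms at $t=0$ and $t=1$ coming from vertical support segments of lengths $\ell_0$ and $\ell_1$. I will treat these atoms as point masses of $f$ at $0$ and of $g$ at $1$, so $f$ and $g$ are understood as measures $-\dd F$ and $\dd G$, consistent with the notation $\alpha,\beta$ in the table.

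The natural choice comes from decomposing each graph into its increasing and decreasing parts. On the part where the boundary moves toward the body (upper boundary rising, lower boundary falling), the arclength element dominates $\dd h$ increases; on the other part it dominates the decreases. Concretely, I would set
\[
 g\propto \sqrt{1+u'^2}\,\mathbf 1_{\{u'>0\}}+\sqrt{1+v'^2}\,\mathbf 1_{\{v'<0\}}+\cdots .
\]
This naive choice is not monotone, however. The correct choice must make $f$ decreasing and $g$ increasing with $F-G$ proportional to $h$, that is, $h'\propto g-f$ after normalization.

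So I will instead try $f(t)=c\bigl(\sqrt{1+u'^2}-u'+\sqrt{1+v'^2}+v'\bigr)/2$ and $g(t)=c\bigl(\sqrt{1+u'^2}+u'+\sqrt{1+v'^2}-v'\bigr)/2$. Then $f+g=c\,q$ and $g-f=c\,(u'-v')=c\,h'$. The function $x\mapsto\sqrt{1+x^2}+x$ is increasing and $x\mapsto\sqrt{1+x^2}-x$ is decreasing. Since $u'$ is decreasing and $v'$ is increasing, $\sqrt{1+u'^2}-u'$ is increasing; this shows I have the orientation reversed. Swapping the names gives $f$ decreasing and $g$ increasing, with $f-g=c\,h'$ up to sign. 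Integrating then yields $F-G=c\,h$ up to a boundary constant, which vanishes because $h(0)$ is absorbed by the vertical atoms: $h(0^+)=\ell_0$ and $h(1^-)=\ell_1$. This is exactly why the atoms must be included.

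Choosing $c$ so that the total mass of $f+g$ is $2$ makes both $f$ and $g$ probability measures. For this I need $\int\sqrt{1+u'^2}\pm u'$ plus the atoms to balance. The check uses $u(1)-u(0)$, $v(1)-v(0)$, and the atoms, together with the identity $\ell_1-\ell_0=h(1)-h(0)$. I expect this bookkeeping of atoms, one-sided derivatives, and countably many kinks (handled by absolute continuity of concave functions) to be the main technical obstacle. Once the construction is in place, $H=F-G=c\,h$, so $H/d=h/|K|$, and Lemma~\ref{lem:one-dimensional} applies directly in each direction.
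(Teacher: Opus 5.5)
Your construction of $f$ and $g$ is, after the swap, exactly the paper's: $f=(q+h')/P(K)$ and $g=(q-h')/P(K)$ (your $c=2/P(K)$), and the monotonicity argument via $s\mapsto\sqrt{\ell^2+s^2}\pm s$ is the same. The genuine gap is how you treat directions in which an extreme support face is a nondegenerate segment. You keep these directions and put atoms $c\ell_0\delta_0$ into $f$ and $c\ell_1\delta_1$ into $g$. You then claim that Lemma~\ref{lem:one-dimensional} ``applies directly in each direction'', but that lemma is stated and proved only for probability \emph{densities}. With the atoms, $F(0)=c\ell_0\neq0$ and $G$ jumps at $1$, so $H=F-G$ no longer vanishes at the endpoints and is no longer continuous. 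The lemma's proof of strictness rules out $Q=0$ precisely through $H(0)=H(1)=0$. For a parallelogram with two sides parallel to $e_\theta^\perp$, $h$ is constant, so $N(x)=x$ and $Q=0$; the argument genuinely fails there. Smoothing the atoms into densities and letting the smoothing vanish would give only a non-strict inequality. So, as written, you are invoking a stronger lemma than the one assumed. A minor point: the laws of $f,g$ are $\dd F,\dd G$, while $-\dd f,\dd g$ are the paper's $\alpha,\beta$, which are a different object.

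The missing idea is that strictness is needed only for almost every $\theta$, not for every direction. The exceptional directions are the normals to maximal nondegenerate boundary segments of $K$. These segments have pairwise disjoint relative interiors in $\partial K$, so at most $nP(K)$ of them have length $\geq 1/n$, and there are at most countably many in total. Discard this null set of directions. Then both extreme support faces are points, $h(0)=h(1)=0$, and there are no atoms. Moreover $\int_0^1h'=0$, so $f,g$ are genuine probability densities with $H=2h/P(K)$, and Lemma~\ref{lem:one-dimensional} applies verbatim. The directional differences are then strictly positive for almost every $\theta$, so their integral over $\theta$ is strictly positive. This is the paper's route (Lemma~\ref{lem:countable-support-segments}), and it removes the atom bookkeeping that you expected to be the main obstacle.
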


\begin{proof}
Fix a unit vector $e_\theta$ and choose orthonormal coordinates with the
$x$-axis directed along $e_\theta$ and the $y$-axis along $e_\theta^\perp$.
Thus, $x$ is the projection coordinate; below, \enquote{vertical} means
\enquote{parallel to $e_\theta^\perp$}.

Assume that both extreme support faces of $K$, lying on the lines
\[
 x=\min_{z\in K}\langle z,e_\theta\rangle,\qquad
 x=\max_{z\in K}\langle z,e_\theta\rangle,
\]
are singletons. This holds for almost every direction: the only exceptional
directions are those normal to maximal nondegenerate support faces of $K$. The
family of such faces is at most countable, so the corresponding set of
directions has measure zero. This standard fact and its short proof are given in
Appendix~\ref{app:countable-support-segments}. After a translation, let the
projection of $K$ onto the $x$-axis be $[0,\ell]$ and introduce the parameter
$t=x/\ell\in[0,1]$. For each $t$, consider the vertical section
\[
 K_t\eq\{y\in\R:(\ell t,y)\in K\}.
\]
It is nonempty because the projection of $K$ is $[0,\ell]$, compact because
$K$ is compact, and convex because it is the intersection of a convex set with
a line. Hence $K_t$ is a closed interval. Set
\[
 v(t)\eq\min K_t,\qquad u(t)\eq\max K_t.
\]
Then
\[
 K=\{(\ell t,y):0\leq t\leq1,\ v(t)\leq y\leq u(t)\},
\]
and convexity of $K$ implies that $u$ is concave and $v$ is convex. Here
$\ell>0$ is the width of the projection, and
\[
 h(t)\eq u(t)-v(t),\qquad h(0)=h(1)=0.
\]

\begin{figure}[ht]
\centering
\begin{tikzpicture}[x=1.05cm,y=1.05cm,>=Stealth]
\fill[blue!7]
 (0,0) .. controls (1.3,1.65) and (4.7,1.45) .. (6,0)
 .. controls (4.5,-1.25) and (1.4,-1.15) .. (0,0);
\draw[thick,blue!65!black]
 (0,0) .. controls (1.3,1.65) and (4.7,1.45) .. (6,0)
 node[pos=.58,above=3pt] {$u(t)$};
\draw[thick,blue!65!black]
 (0,0) .. controls (1.4,-1.15) and (4.5,-1.25) .. (6,0)
 node[pos=.55,below=3pt] {$v(t)$};
\draw[->] (-.35,-1.55)--(6.6,-1.55) node[right] {$e_\theta$};
\draw[densely dashed] (0,-1.55)--(0,0);
\draw[densely dashed] (6,-1.55)--(6,0);
\node[below] at (0,-1.55) {$0$};
\node[below] at (6,-1.55) {$1$};
\draw[<->,red!70!black,thick] (3.25,-1.02)--(3.25,1.20)
 node[midway,right=3pt] {$h(t)$};
\draw[densely dashed] (3.25,-1.55)--(3.25,-1.02);
\node[below] at (3.25,-1.55) {$t$};
\node at (3.0,.1) {$K$};
\end{tikzpicture}
\caption{Schematic representation of the body between the graphs $v(t)$ and $u(t)$
in a fixed projection direction.}\label{fig:projection-body}
\end{figure}

The projection of the uniform area measure has density
\begin{equation}\label{eq:area-projection}
                \rho_{\mathrm{in}}(t)=\frac{\ell h(t)}{|K|}.
\end{equation}
This formula is derived in Appendix~\ref{app:area-projection-density}.

Since $u$ is concave and $v$ is convex, the derivatives $u',v'$ exist almost
everywhere. The projection of normalized boundary length has density
\begin{equation}\label{eq:boundary-projection}
 \rho_{\mathrm{bd}}(t)=\frac{q(t)}{P(K)},\qquad
 q(t)\eq\sqrt{\ell^2+u'(t)^2}+\sqrt{\ell^2+v'(t)^2}.
\end{equation}
This formula is derived in Appendix~\ref{app:boundary-projection-density}.
Define
\begin{equation}\label{eq:rho-pm}
 f(t)\eq\frac{q(t)+h'(t)}{P(K)},\qquad
 g(t)\eq\frac{q(t)-h'(t)}{P(K)}.
\end{equation}
It is readily verified that $s\mapsto\sqrt{\ell^2+s^2}+s$ is strictly
increasing, while $s\mapsto\sqrt{\ell^2+s^2}-s$ is strictly decreasing. Since
$u'$ is decreasing and $v'$ is increasing,
\[
 q+h'=(\sqrt{\ell^2+u'^2}+u')+(\sqrt{\ell^2+v'^2}-v')
\]
is decreasing. Similarly, $q-h'$ is increasing. Moreover,
\[
 \int_0^1q(t)\dd t=P(K),\qquad \int_0^1h'(t)\dd t=0,
\]
so $f$ and $g$ are probability densities, decreasing and increasing,
respectively. It follows from \eqref{eq:boundary-projection}--
\eqref{eq:rho-pm}
\[
             \rho_{\mathrm{bd}}=\frac{f+g}{2}.
\]

If $F$ and $G$ are the distribution functions associated with $f$ and $g$,
then
\begin{equation}\label{eq:H-geometric}
 H(t)=F(t)-G(t)=\frac{2h(t)}{P(K)},\qquad
 d=\int_0^1H(t)\dd t=\frac{2|K|}{P(K)\ell}>0.
\end{equation}
Consequently,
\[
                   \frac{H(t)}d=\frac{\ell h(t)}{|K|}
                   =\rho_{\mathrm{in}}(t).
\]
Lemma~\ref{lem:one-dimensional} applies and, in the present notation, gives
\[
 \iint_{[0,1]^2}|t_1-t_2|
 \frac{H(t_1)}d\frac{H(t_2)}d\dd t_1\dd t_2
 <
 \iint_{[0,1]^2}|t_1-t_2|
 \frac{f(t_1)+g(t_1)}2\frac{f(t_2)+g(t_2)}2\dd t_1\dd t_2.
\]
The left-hand side is the mean distance between the parameters of two
independent points distributed according to the projection of the interior
measure; the right-hand side is the corresponding mean for the projection of
the boundary measure. Proving this one-dimensional inequality contains the
entire technical difficulty. Only a geometric rescaling and averaging over
directions remain.

For two points with parameters $t_1,t_2$, the distance between their projection
coordinates is
\[
 \bigl|\langle z_1-z_2,e_\theta\rangle\bigr|=\ell|t_1-t_2|.
\]
Thus, multiplying the strict one-dimensional inequality by $\ell>0$ yields,
for almost every direction $\theta$,
\begin{equation}\label{eq:directional}
 \E\left|\langle I_1-I_2,e_\theta\rangle\right|
 <
 \E\left|\langle B_1-B_2,e_\theta\rangle\right|.
\end{equation}

All statements about derivatives above are understood almost everywhere; the
arc-length formulas and the monotonicity of the derivatives of a convex function
are standard in this generality. In an exceptional direction, one of the
extreme support faces at $x=0$ or $x=\ell$ may be a nontrivial vertical segment
of the boundary. The corresponding set of values of $\theta$ is at most
countable and therefore has measure zero in the integration over $\theta$.

Finally, for each $z\in\R^2$
\begin{equation}\label{eq:cosine-representation}
       |z|=\frac12\int_0^\pi|\langle z,e_\theta\rangle|\dd\theta.
\end{equation}
This identity follows by rotating coordinates: for
$z=(|z|,0)$ the integral is equal to $|z|\int_0^\pi|\cos\theta|\dd\theta=2|z|$.
The difference between the right- and left-hand sides of
\eqref{eq:directional} is a measurable function of $\theta$ and is strictly
positive almost everywhere. Its integral is therefore strictly positive.
Integrating over $\theta$ and using Fubini's theorem and
\eqref{eq:cosine-representation}, we get
\[
 \E|I_1-I_2|
 =\frac12\int_0^\pi\E|\langle I_1-I_2,e_\theta\rangle|\dd\theta
 <
 \frac12\int_0^\pi\E|\langle B_1-B_2,e_\theta\rangle|\dd\theta
 =\E|B_1-B_2|,
\]
as required.
\end{proof}

\section{One-dimensional comparison}\label{sec:one-dimensional}

We now turn to the analytic core of the paper. The only technical input to the
one-dimensional proof is the following moment estimate. Its proof is given in
Appendix~\ref{app:moment-proof}.

Set
\[
       \gamma(r)\eq1+r-2r^2=(1-r)(1+2r),\qquad 0\leq r\leq1.
\]

\begin{lemma}[moment lemma]\label{lem:moment}
Let $R\in[0,1]$, $\varepsilon\in\{-1,1\}$ and
\[
  \mathbb P\{\varepsilon=1\}=\mathbb P\{\varepsilon=-1\}=\frac12;
\]
the dependence between $R$ and $\varepsilon$ is arbitrary. If
\[
 a=\E R>0,\qquad b=\E\bigl[R^2(1-R)\bigr],
\]
then
\begin{equation}\label{eq:moment-main}
 \frac14\Var(\varepsilon R)+\frac1{12}\Var(\gamma(R))
 \geq\frac{a^2-b}{6a}.
\end{equation}
\end{lemma}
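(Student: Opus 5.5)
The plan is to turn \eqref{eq:moment-main} into a finite-dimensional polynomial inequality. Write $\mu_+,\mu_-$ for the conditional laws of $R$ given $\varepsilon=\pm1$, so that the law of $R$ is $\mu=\tfrac12(\mu_++\mu_-)$. Put $m_\pm=\int r\,\mu_\pm(\dd r)$ and $e=\E(\varepsilon R)=\tfrac12(m_+-m_-)$. Then
\[
 \Var(\varepsilon R)=\E R^2-e^2,
\]
while $\Var(\gamma(R))$, $a$ and $b$ depend only on $\mu$. Hence the only term affected by the coupling between $R$ and $\varepsilon$ is $-e^2$. For a fixed $\mu$, $|e|$ is maximised by the monotone coupling: $\varepsilon=1$ on the upper half of the mass of $\mu$ and $\varepsilon=-1$ on the lower half, with an atom split at the median if necessary. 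So it suffices to prove the inequality with $e$ replaced by
\[
 e^*(\mu)=\tfrac12\bigl(\E[R\mid\text{upper half}]-\E[R\mid\text{lower half}]\bigr).
\]
Multiplying by $12a>0$ gives the goal
\[
 \Phi(\mu)\eq 3a\bigl(\E R^2-e^{*2}\bigr)+a\Var(\gamma(R))-2a^2+2b\ \ge 0 .
\]

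Next comes a reduction to atomic measures. Split $\mu$ at its median into two probability measures $\nu_-$ and $\nu_+$, supported on $[0,m]$ and $[m,1]$, with $\mu=\tfrac12(\nu_-+\nu_+)$. Then $\Phi$ is a polynomial in finitely many linear functionals of $\nu_\pm$, namely the moments $\int r^k\,\dd\nu_\pm$ for $k\le4$ (since $\gamma^2$ has degree $4$). Fix the values of all functionals except one and minimise over the moment slice. The remaining functional enters linearly, or enters as a concave quadratic such as $-e^{*2}$ or $-(\E\gamma)^2$, and a concave function on a convex set attains its minimum at an extreme point. By the Carathéodory/Richter theorem for moment slices, such extreme points are finitely atomic measures with a bounded number of atoms on each side of the median. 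This reduces the lemma to nonnegativity of explicit polynomials in the atom positions $r_i\in[0,1]$ and the weights, all ranging over products of simplices.

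Finally, the lemma follows case by case. For $a\ge\tfrac12$, I expect that $e^*$ can be bounded crudely by an expression in $a$ and the second and third moments (for instance via $e^*\le\tfrac12\E|R-R'|$ or via $e^*\le\min(a,1-a)$). The inequality then becomes a single polynomial inequality on a box, which one certifies by expanding in a tensor Bernstein basis of degree $(3,3)$ and checking that all coefficients are nonnegative rationals. For $0<a<\tfrac12$ the crude bound on $e^*$ loses too much, because near $a\to0$ both sides vanish to second order. There one must keep the two-sided structure: replace $\gamma$ and the cubic $r^2(1-r)$ by their convex envelopes on the relevant range to shrink the number of atoms, and then verify the resulting finitely many atomic configurations (I expect roughly five) by separate Bernstein certificates.

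The main obstacle is the small-$a$ regime. There $\Phi$ is degenerate: the thin-rectangle limit of Remark~\ref{rem:strict-sharpness} shows that the overall comparison is asymptotically tight. The certificates must therefore be sharp near $a=0$, presumably after dividing out an explicit factor of $a$ or $a^2$. Choosing the right convex-envelope reduction so that only a handful of atomic patterns survive is the delicate step. I would first try two atoms per half, with one atom pinned at $0$ or $1$, and test this numerically before attempting the certificates.
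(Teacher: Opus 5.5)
Your first reduction is fine: for a fixed law of $R$, only the term $-3ae^2$ depends on the coupling, and the monotone coupling maximizes $|e|$. The range $a\ge\tfrac12$, however, has a real gap, and neither of your suggested bounds closes it.
\begin{itemize}
\item The bound $e^*\le\tfrac12\E|R-R'|$ is false: for $R$ uniform on $\{0,1\}$ one has $e^*=\tfrac12$, while $\tfrac12\E|R-R'|=\tfrac14$.
\item The bound $e^*\le 1-a$ is true but too weak: for $R\equiv\tfrac12$ it turns your $\Phi$ into $-\tfrac14$.
\end{itemize}
More fundamentally, a bound on $e$ alone never yields a polynomial on a box. The term $\Var(\gamma(R))$ still contains $-(\E\gamma(R))^2$, which is nonlinear in the law of $R$. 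This term cannot be discarded: the lemma is an equality at $a=\tfrac12$ for $\mu_+=\delta_1$, $\mu_-=\delta_0$, so the tight regime is not only $a\to0$. The missing idea is to linearize both squared means at once. Since $\E\varepsilon=0$ and $\E(R-a)=0$, the vector $W=\bigl(\sqrt3\,\varepsilon(R-\tfrac12),\,\gamma(R)+R-a\bigr)$ has mean $(\sqrt3\,e,u)$ with $u=\E\gamma(R)$. Then $\lVert\E W\rVert^2\le\E\lVert W\rVert^2$ reduces the claim to a pointwise inequality in $r$, which after $t=2a-1$ becomes $\mathcal B(r,t)\ge0$ on $[0,1]^2$. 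The mean-zero shift $R-a$ is essential; without it you would in effect be discarding $\Var(\gamma(R))$.

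For $0<a<\tfrac12$ your reduction stops too early. Richter's theorem for the moments $\int r^k\,\nu_\pm(\dd r)$, $k\le4$, is valid, but it leaves up to five atoms per conditional law, about eighteen free variables, far beyond a Bernstein certificate. Replacing $\gamma$ or $r^2(1-r)$ by their own convex envelopes is not a monotone operation on $\Var(\gamma(R))$. It also would not force the two conditional laws onto common atoms.

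The idea you are missing is to freeze $u=\E\gamma(R)$ and write $a\Var(\gamma(R))=a\,\E(\gamma(R)-u)^2$. Then the cleared deficit becomes $\tfrac12\bigl(\int F_{a,u}\dd\mu_++\int F_{a,u}\dd\mu_-\bigr)-2a^2-3ae^2$, with the single quartic $F_{a,u}(r)=(3a+2)r^2-2r^3+a(\gamma(r)-u)^2$. The argument then runs as follows.
\begin{itemize}
\item Apply Jensen with $\widehat F=\co F_{a,u}$ to each conditional law. This keeps the conditional means, hence $a$ and $e$, fixed.
\item Replace $\mu_\pm$ by the point or two-point measures realizing $\widehat F$ at those means. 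This can only decrease the deficit: against the new measures, the average of $F_{a,u}$ exceeds that of $F_{a,\widetilde u}$ by exactly $a(\widetilde u-u)^2\ge0$, where $\widetilde u$ is their mean of $\gamma$.
\item Since $\tfrac14F_{a,u}''$ is a quadratic with positive leading coefficient and positive value at $0$, the envelope has at most one chord. For $a<\tfrac12$ no chord issues from $0$.
\item Hence both laws are point masses or sit on the endpoints of the \emph{same} interior chord $[x,y]$ or right chord $[t,1]$.
\end{itemize}
This shared-chord structure is what leaves only the closed-form Dirac pair and five families in three or four variables.

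A sound version of your extreme-point idea is to fix only the two conditional means. The deficit is then linear minus $a(\E\gamma(R))^2$, hence concave, so each law can be taken with at most two atoms. That still leaves six variables and no shared chord.
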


\begin{samepage}
The one-dimensional lemma compares $\E|X_1-X_2|$ for two different
distributions. To write both sides uniformly as functionals of the corresponding
measures, we introduce the following notation.

\begin{definition}\label{def:gini}
For a probability measure $\sigma$ on $[0,1]$, set
\[
 D(\sigma)\eq\iint|x-y|\,\sigma(\dd x)\sigma(\dd y).
\]
\end{definition}
\end{samepage}

If $X,Y$ are independent and both have distribution $\sigma$, then
$D(\sigma)=\E|X-Y|$, the mean absolute difference of two independent
observations. This quantity is therefore called the \emph{Gini mean
difference}. Gini introduced mean pairwise differences as a measure of
variability in~\cite[p.~22]{Gini1912}.

The word \enquote{unstandardized} refers not to the measure $\sigma$, which
still has total mass $1$, but to the quantity $D(\sigma)$ itself: it is not
divided by any scale characteristic of the distribution. In particular, replacing
$X$ by $cX$, where $c>0$, multiplies the mean difference by $c$, so it has the
same physical dimension as $X$.

\begin{definition}[Gini coefficient]\label{def:gini-coefficient}
For the probability measure $\sigma$ on $[0,1]$ with positive mean
\[
 m_\sigma\eq\int_0^1x\,\sigma(\dd x)>0
\]
the quantity
\[
 \operatorname{Gin}(\sigma)\eq\frac{D(\sigma)}{2m_\sigma}
\]
is called the \emph{Gini coefficient}.
\end{definition}

The Gini coefficient is dimensionless and is unchanged when $X$ is replaced by
$cX$. This relative normalization was introduced by Gini as the concentration
ratio in~\cite{Gini1914}. The present proof requires $D(\sigma)$ rather than
$\operatorname{Gin}(\sigma)$, because we compare the mean distances themselves,
not their ratios to the means of the distributions.

If $S$ is the distribution function of $\sigma$, then the classical
representation
\begin{equation}\label{eq:gini-cdf}
 D(\sigma)=2\int_0^1S(x)(1-S(x))\dd x
 =\frac12-2\int_0^1(S(x)-\tfrac12)^2\dd x.
\end{equation}
This is not a new result of the present paper: a standard layer-cake derivation
is given, for example, in
\cite[Section~2.1.2, formulas~(2.4)--(2.9)]{YitzhakiSchechtman2013}.
A short equivalent proof using indicator functions and Tonelli's theorem is
given in Appendix~\ref{app:gini-cdf-proof}.

For the uniform probability measure $\lambda$ on $[0,1]$, the distribution
function is $S_\lambda(x)=x$, hence
\[
 \int_0^1(S_\lambda(x)-\tfrac12)^2\dd x
 =\int_0^1(x-\tfrac12)^2\dd x=\frac1{12},
 \qquad D(\lambda)=\frac13.
\]

To prove the one-dimensional lemma, it is convenient to replace the comparison
of Gini mean differences by the reverse comparison of the corresponding
quadratic deviations, which can be estimated using Bessel's inequality and
integration by parts. In the theory of sharp inequalities, the difference
between a reference value and the actual value is commonly called a
\emph{deficit}; its precise normalization depends on the problem.

\begin{definition}[deficit relative to uniform distribution]
\label{def:gini-deficit}
For a probability measure $\sigma$ on $[0,1]$ with distribution function $S$,
set
\begin{equation}\label{eq:gini-deficit}
 E(\sigma)\eq
 \int_0^1(S(x)-\tfrac12)^2\dd x-\frac1{12}
 =\frac{1/3-D(\sigma)}2.
\end{equation}
We call $E(\sigma)$ the \emph{deficit of the Gini mean difference relative to
the uniform distribution}. For an arbitrary $\sigma$ it may have either sign;
the word \enquote{deficit} indicates the chosen reference distribution and does
not assert nonnegativity in advance.
\end{definition}

\begin{lemma}[one-dimensional lemma]\label{lem:one-dimensional}
Let $f$ be a decreasing probability density and $g$ an increasing probability
density on $[0,1]$. Let $F,G$ be their distribution functions,
\[
 H=F-G,\qquad d=\int_0^1H(x)\dd x>0.
\]
Define
\[
 \sigma_{\mathrm{in}}(\dd x)\eq\frac{H(x)}d\dd x,\qquad
 \sigma_{\mathrm{bd}}(\dd x)\eq\frac{f(x)+g(x)}2\dd x
\]
and set
\[
 D_{\mathrm{in}}\eq D(\sigma_{\mathrm{in}}),\qquad
 D_{\mathrm{bd}}\eq D(\sigma_{\mathrm{bd}}),\qquad
 E_{\mathrm{in}}\eq E(\sigma_{\mathrm{in}}),\qquad
 E_{\mathrm{bd}}\eq E(\sigma_{\mathrm{bd}}).
\]
Then the following strict inequalities are equivalent:
\begin{equation}\label{eq:one-dimensional-result}
 D_{\mathrm{in}}<D_{\mathrm{bd}}
 \qquad\Longleftrightarrow\qquad
 E_{\mathrm{in}}>E_{\mathrm{bd}}.
\end{equation}
\end{lemma}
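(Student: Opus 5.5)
The equivalence in \eqref{eq:one-dimensional-result} is immediate from Definition~\ref{def:gini-deficit}: by \eqref{eq:gini-deficit} we have $E(\sigma)=\tfrac12\bigl(\tfrac13-D(\sigma)\bigr)$ for every probability measure $\sigma$ on $[0,1]$. The map $D\mapsto\tfrac12(\tfrac13-D)$ is strictly decreasing, so $D_{\mathrm{in}}<D_{\mathrm{bd}}$ holds if and only if $E_{\mathrm{in}}>E_{\mathrm{bd}}$.

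Proposition~\ref{prop:projection-reduction} needs the inequality itself, not only the equivalence. So the substantive plan is to prove $E_{\mathrm{in}}>E_{\mathrm{bd}}$ directly, working with the distribution functions $S_{\mathrm{bd}}=(F+G)/2$ and $S_{\mathrm{in}}(x)=d^{-1}\int_0^xH$.

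\textbf{Step 1 (mixture representation).} A decreasing probability density on $[0,1]$ is a mixture of the uniform densities $r^{-1}\mathbf 1_{[0,r]}$. An increasing one is a mixture of $r^{-1}\mathbf 1_{[1-r,1]}$. I would encode $\tfrac12(f+g)$ by a random pair $(\varepsilon,R)$ with $\mathbb P\{\varepsilon=\pm1\}=\tfrac12$: here $\varepsilon=1$ selects the $f$-component, $\varepsilon=-1$ selects the $g$-component, and $R$ is the length of the supporting interval. In these terms $F$, $G$, $H=F-G$ and $d$ become expectations of explicit piecewise-polynomial kernels in $(\varepsilon,R)$. A direct computation should give $d$ in terms of $a=\E R$, consistent with the appearance of $a$ in Lemma~\ref{lem:moment}.

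\textbf{Step 2 (upper bound for $E_{\mathrm{bd}}$).} I would write $S_{\mathrm{bd}}-\tfrac12$ and expand $\int_0^1(S_{\mathrm{bd}}-\tfrac12)^2$ against the orthonormal pair $\varphi_0\equiv1$ and $\varphi_1(x)=\sqrt{12}(x-\tfrac12)$. The aim is to split $\int(S_{\mathrm{bd}}-\tfrac12)^2-\tfrac1{12}$ into projection terms plus a remainder, and to bound that remainder using Bessel's inequality applied to the mixture kernels. The projection coefficients are expected to be the moments $\E(\varepsilon R)$ and $\E\gamma(R)$. Their fluctuations then produce the variances $\Var(\varepsilon R)$ and $\Var(\gamma(R))$ with weights $\tfrac14$ and $\tfrac1{12}$, matching the left side of \eqref{eq:moment-main}.

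\textbf{Step 3 (exact formula for $E_{\mathrm{in}}$).} I would integrate by parts in $\int_0^1(S_{\mathrm{in}}-\tfrac12)^2$, using $S_{\mathrm{in}}'=H/d$ and $H(0)=H(1)=0$. This should produce an expression in $a$ and $b=\E[R^2(1-R)]$ plus a nonnegative quadratic remainder of the form $\int(\cdot)^2$. That remainder is strictly positive, because $H$ vanishes at both endpoints while $d>0$, so $S_{\mathrm{in}}$ cannot coincide with the extremal linear profile.

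\textbf{Step 4 (combination).} Subtracting the bounds of Steps 2 and 3, the inequality $E_{\mathrm{in}}>E_{\mathrm{bd}}$ reduces precisely to \eqref{eq:moment-main}, with strictness supplied by the remainder from Step 3. I would then invoke Lemma~\ref{lem:moment}.

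The main obstacle is the bookkeeping in Steps 2 and 3: the Bessel bound and the integration by parts must produce exactly the combination $\tfrac14\Var(\varepsilon R)+\tfrac1{12}\Var(\gamma(R))$ on one side and $(a^2-b)/(6a)$ on the other, with no lost terms. My first attempt would be to compute both deficits exactly for a single atom $(\varepsilon,R)=(\pm1,r)$ and for two-atom mixtures, read off the kernels, and only then carry out the general Bessel step.
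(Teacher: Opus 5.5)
Your plan is essentially the paper's proof: the $(\varepsilon,R)$ mixture of one-sided uniform components, and Bessel's inequality with $\varphi_0,\varphi_1$ applied to the fluctuations of the component distribution functions (through the exact within/between split $E_{\mathrm{bd}}=a/6-\mathcal V$). Then the moment lemma gives $E_{\mathrm{bd}}\le b/(6a)$, and the exact identity $E_{\mathrm{in}}=b/(6a)+\int_0^1(S_{\mathrm{in}}(x)-x)^2\dd x$ has a remainder that is positive because $H(0)=H(1)=0$ and $d>0$.

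One bookkeeping fix: $R$ must be one minus the length of the supporting interval (components $U[0,1-R]$ and $U[R,1]$), since only then $d=\E R=a$ and the kernel projections are $\varepsilon R/2$ and $\gamma(R)/\sqrt{12}$. The paper also first assumes $f,g$ bounded and removes this assumption by truncation.
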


\begin{proof}
\step{Step 1: validity of the density $H/d$.}
Temporarily assume that $f$ and $g$ are bounded; this assumption will be
removed in the final step.

Fix an interior point $0<x<1$. Since $f$ is a probability density, its mean
value over the whole interval $[0,1]$ is $\int_0^1f(t)\dd t=1$. Because $f$
is decreasing, its mean over the left interval $[0,x]$ is at least its mean
over the right interval $[x,1]$:
\[
 \frac{F(x)}x=\frac1x\int_0^x f(t)\dd t
 \geq \frac1{1-x}\int_x^1f(t)\dd t
 =\frac{1-F(x)}{1-x}.
\]
Multiplying by $x(1-x)>0$ shows that this inequality is equivalent to
$F(x)\geq x$. For the increasing density $g$, the inequality is reversed:
\[
 \frac{G(x)}x\leq\frac{1-G(x)}{1-x},
 \qquad\text{hence}\qquad G(x)\leq x.
\]
At $x=0$ and $x=1$, the corresponding equalities follow from
$F(0)=G(0)=0$ and $F(1)=G(1)=1$. Thus
$F(x)\geq x\geq G(x)$ on $[0,1]$, and hence $H=F-G\geq0$.
If $d=0$, continuity and nonnegativity of $H$ would imply $H\equiv0$, and
$H/d$ would be undefined; this case is excluded by the assumption $d>0$ in
the lemma. Thus $H/d$ is well defined, and the definition of $d$ gives
$\int_0^1H(x)/d\,\dd x=1$.

\step{Step 2: representation of monotone densities as mixtures.}
\phantomsection\label{step:mixture-representation}
The next idea is to represent $f$ and $g$ as \emph{mixtures of uniform
distributions} on one-sided intervals. We denote the variability of the
component distribution functions by $\mathcal V$. Bessel's inequality and the
moment lemma will give an upper bound for the boundary deficit, while the
curvature $-H''$ will give an exact lower bound for the interior deficit.

We use Khinchin's classical representation of unimodal distributions as
mixtures of uniform distributions~\cite{Khinchin1938}. In the present one-sided
case it follows directly. For clarity, first assume that $f$ is differentiable,
and denote by
\[
 u_s(x)\eq\frac{\mathds{1}_{[0,s]}(x)}s,\qquad 0<s\leq1,
\]
the density of the uniform distribution on $[0,s]$. Since $f$ is decreasing,
$-f'(s)\geq0$, and
\begin{align*}
 f(x)
 &=f(1)+\int_x^1\bigl(-f'(s)\bigr)\dd s\\
 &=f(1)u_1(x)
   +\int_0^1u_s(x)\,s\bigl(-f'(s)\bigr)\dd s.
\end{align*}
The last line has an exact probabilistic interpretation: $f$ is a mixture of
the densities $u_s$, with weight $s(-f'(s))\dd s$ assigned to the component
$U[0,s]$, while the component $U[0,1]$ also receives the weight $f(1)$.
Integration by parts shows that the total mass of these weights is
\[
 f(1)+\int\limits_0^1s\bigl(-f'(s)\bigr)\dd s
 =\int\limits_0^1f(s)\dd s=1.
\]
Thus this is a genuine probability mixture, not merely a formal linear
combination.

For an arbitrary bounded decreasing $f$, the derivative is replaced by the
nonnegative Stieltjes measure $\alpha=-\dd f$. The identities are understood at
points of continuity of $f$, and hence almost everywhere. Then
\[
 f(x)=f(1)+\alpha([x,1])
     =f(1)u_1(x)+\int_{(0,1)}u_s(x)\,s\alpha(\dd s),
\]
and the mixing probability measure is
\[
 \pi_f(\dd s)\eq s\alpha(\dd s)+f(1)\delta_1(\dd s),
 \qquad
 \pi_f((0,1])=f(1)+\int_{(0,1)}s\alpha(\dd s)=1.
\]

The argument for increasing $g$ is symmetric. If
\[
 v_s(x)\eq\frac{\mathds{1}_{[s,1]}(x)}{1-s},\qquad 0\leq s<1,
\]
then in the differentiable case
\[
 g(x)=g(0)v_0(x)
      +\int_0^1v_s(x)(1-s)g'(s)\dd s.
\]
In the general case, setting $\beta=\dd g$ gives a mixture of uniform
distributions on $[s,1]$ with mixing probability measure
\[
 \pi_g(\dd s)\eq(1-s)\beta(\dd s)+g(0)\delta_0(\dd s),
 \qquad \pi_g([0,1))=1.
\]
In other words, a random variable with density $f$ can be generated by first
choosing $S$ with law $\pi_f$ and then choosing a point uniformly on $[0,S]$.
For $g$, after choosing $S$ with law $\pi_g$, the point is chosen uniformly on
$[S,1]$.

Mix the two distributions with equal weights; the mixture has density
$(f+g)/2$. For uniform notation, introduce a random orientation
$\varepsilon\in\{-1,1\}$ taking both values with equal probabilities.
Conditionally on $\varepsilon$, choose $R\in[0,1]$ so that the law of $1-R$ is
$\pi_f$ when $\varepsilon=1$, and the law of $R$ is $\pi_g$ when
$\varepsilon=-1$. Conditionally on $(\varepsilon,R)$, choose $X$ according to
the probability measure $\mu_{\varepsilon,R}$, where, for $0\leq r<1$,
\[
 \mu_{\varepsilon,r}\eq
 \begin{cases}
 U[0,1-r],&\varepsilon=1,\\
 U[r,1],&\varepsilon=-1.
 \end{cases}
\]
Here $U[I]$ denotes the uniform probability measure on the interval $I$. For
$r=1$, set
$\mu_{1,1}=\delta_0$ and $\mu_{-1,1}=\delta_1$. Measures
$\mu_{\varepsilon,r}$ will be called \emph{mixture components}: they are the
conditional laws of $X$ for a fixed selecting pair $(\varepsilon,r)$. Let
$C_{\varepsilon,r}$ denote the distribution function of
$\mu_{\varepsilon,r}$. For $r<1$, its restriction to $[0,1]$ is
\[
 C_{1,r}(x)=
 \begin{cases}
 x/(1-r),&0\leq x\leq1-r,\\
 1,&1-r<x\leq1,
 \end{cases}
 \qquad
 C_{-1,r}(x)=
 \begin{cases}
 0,&0\leq x<r,\\
 (x-r)/(1-r),&r\leq x\leq1.
 \end{cases}
\]

\begin{remark}
For fixed $(\varepsilon,r)$, the function $C_{\varepsilon,r}$ is
deterministic. The function $C_{\varepsilon,R}$ is random only because the pair
$(\varepsilon,R)$ selecting the component is random. In particular, for fixed
$x$, the value $C_{\varepsilon,R}(x)$ is an ordinary random variable, and all
expectations and variances below are taken with respect to $(\varepsilon,R)$.
\end{remark}

By the law of total probability, the mixture of these components has density
$(f+g)/2$, and its distribution function is
\[
\begin{aligned}
 M(x)
 &\eq \mathbb P(X\leq x)\\
 &=\E\!\left[\mathbb P(X\leq x\mid\varepsilon,R)\right]\\
 &=\E C_{\varepsilon,R}(x).
\end{aligned}
\]
Set
\[
 a\eq\E R,\qquad b\eq\E[R^2(1-R)].
\]
Denote the variance of the component distribution functions, averaged over
$x$, by
\[
 \mathcal V\eq\int_0^1\Var(C_{\varepsilon,R}(x))\dd x.
\]
The quantity $\mathcal V$ measures the average variability of the component
distribution functions around their mean function $M$. Equivalently, by the
definition of variance and Tonelli's theorem,
\[
 \mathcal V
 =\E\int_0^1\bigl(C_{\varepsilon,R}(x)-M(x)\bigr)^2\dd x.
\]

\begin{remark}[role of the mixture representation]
The representation serves three purposes. First, it replaces arbitrary
monotone densities by one-parameter uniform components with explicitly
computable piecewise-linear distribution functions. Second, the
infinite-dimensional data in $f$ and $g$ are encoded by the pair
$(\varepsilon,R)$ and the quantities $a,b,\mathcal V$, to which Bessel's
inequality and the moment lemma apply. Third, the same mixing measures describe
the curvature $-H''=\alpha+\beta$, linking the boundary density $(f+g)/2$ to
the interior density $H/d$. Schematically, the remainder of the proof is
\[
 (f,g)\ \longrightarrow\ (\varepsilon,R)
 \ \longrightarrow\ (a,b,\mathcal V)
 \ \longrightarrow\
 D_{\mathrm{in}}<D_{\mathrm{bd}}.
\]
The representation is chosen precisely because it connects both measures being
compared, not merely because its components are simple.
\end{remark}

\step{Step 3: formulas for the components.}
For each component, direct integration gives
\begin{align}
 \int_0^1C_{\varepsilon,r}(x)\dd x
   &=\frac{1+\varepsilon r}{2},\label{eq:component-mode0}\\
 \int_0^1C_{\varepsilon,r}(x)(x-\tfrac12)\dd x
   &=\frac{1+r-2r^2}{12}
     =\frac{\gamma(r)}{12},
     \qquad \gamma(r)\eq1+r-2r^2,
     \label{eq:component-mode1}\\
 D(\mu_{\varepsilon,r})&=\frac{1-r}{3}.
 \label{eq:component-gini}
\end{align}
Both component measures are uniform on intervals of the same length $1-r$.
For the uniform distribution on an arbitrary interval of length $L$, the mean
absolute difference of two independent random points is $L/3$. Here
$L=1-r$, which gives
\eqref{eq:component-gini}.

Averaging \eqref{eq:component-mode0} separately conditional on
$\varepsilon=1$ and $\varepsilon=-1$, and using that these events have equal
probabilities, gives
\begin{equation}\label{eq:d-equals-a}
 d=\int_0^1(F-G)\dd x
   =\frac12\bigl(\E[R\mid\varepsilon=1]
                 +\E[R\mid\varepsilon=-1]\bigr)
   =\E R=a>0.
\end{equation}

We now apply the law of total variance in detail. For fixed $x\in[0,1]$, set
$I_x\eq\mathds{1}_{\{X\leq x\}}$. Then
\[
 \E[I_x\mid\varepsilon,R]=C_{\varepsilon,R}(x),
 \qquad
 \E I_x=M(x).
\]
Since $I_x$ is an indicator, the law of total variance gives
\[
\begin{aligned}
 M(x)(1-M(x))
 &=\Var(I_x)\\
 &=\E\!\left[\Var(I_x\mid\varepsilon,R)\right]
   +\Var\!\left(\E[I_x\mid\varepsilon,R]\right)\\
 &=\E\!\left[
    C_{\varepsilon,R}(x)(1-C_{\varepsilon,R}(x))\right]
   +\Var(C_{\varepsilon,R}(x)).
\end{aligned}
\]
Integrate this identity over $x$ and use \eqref{eq:gini-cdf}. After
multiplication by $2$, the left-hand side is the Gini mean difference of the
entire mixture, namely $D_{\mathrm{bd}}$. The first term on the right is the
average Gini mean difference within a randomly selected component. Therefore
\begin{align}
 D_{\mathrm{bd}}
 &=2\int_0^1M(x)(1-M(x))\dd x \notag\\
 &=\E\!\left[
     2\int_0^1C_{\varepsilon,R}(x)
       (1-C_{\varepsilon,R}(x))\dd x\right]
   +2\int_0^1\Var(C_{\varepsilon,R}(x))\dd x \notag\\
 &=\E D(\mu_{\varepsilon,R})+2\mathcal V
  =\frac{1-a}{3}+2\mathcal V.
 \label{eq:mixture-gini}
\end{align}
Here $\E D(\mu_{\varepsilon,R})=(1-a)/3$ is the mean within-component
Gini difference: first choose one random component, and then independently
sample two points from it. The additional term $2\mathcal V$ accounts exactly
for the \emph{between-component variability} that appears in the Gini mean
difference of the full mixture. Thus $\mathcal V$ is not an artificial
auxiliary quantity, but the exact correction that converts the mean
within-component Gini difference into the Gini mean difference of the mixture.
It is this correction that must next be bounded from below.

\step{Step 4: Bessel's inequality and an estimate for $\mathcal V$.}
Recall the following form of Bessel's inequality
\cite{KolmogorovFomin1972}: if $\varphi_0,\varphi_1$ are orthonormal in a
Hilbert space, then every element $h$ satisfies
\[
 \|h\|^2\geq |\langle h,\varphi_0\rangle|^2
                  +|\langle h,\varphi_1\rangle|^2.
\]
This follows directly by decomposing $h$ into its projections onto
$\varphi_0,\varphi_1$ and a remainder orthogonal to both.

In $L^2[0,1]$, the functions
\[
 \varphi_0(x)=1,\qquad \varphi_1(x)=\sqrt{12}(x-\tfrac12)
\]
are orthonormal. Set
$Z_{\varepsilon,R}(x)\eq C_{\varepsilon,R}(x)-M(x)$. For every fixed value of
$(\varepsilon,R)$, apply Bessel's inequality to $Z_{\varepsilon,R}$.

We compute the two projections explicitly. Since
$M(x)=\E C_{\varepsilon,R}(x)$, Fubini's theorem gives, for every bounded
function $\psi$,
\[
 \int_0^1M(x)\psi(x)\dd x
 =\E\int_0^1C_{\varepsilon,R}(x)\psi(x)\dd x.
\]
Taking $\varphi_0(x)=1$ and using \eqref{eq:component-mode0}, we obtain
\[
\begin{aligned}
 \langle Z_{\varepsilon,R},\varphi_0\rangle
 &=\int_0^1\bigl(C_{\varepsilon,R}(x)-M(x)\bigr)\dd x\\
 &=\frac{1+\varepsilon R}{2}
   -\E\!\left[\frac{1+\varepsilon R}{2}\right]\\
 &=\frac{\varepsilon R-\E(\varepsilon R)}2.
\end{aligned}
\]
Similarly, for $\varphi_1(x)=\sqrt{12}(x-\tfrac12)$,
\eqref{eq:component-mode1} gives
\[
\begin{aligned}
 \langle Z_{\varepsilon,R},\varphi_1\rangle
 &=\sqrt{12}\int_0^1
   \bigl(C_{\varepsilon,R}(x)-M(x)\bigr)(x-\tfrac12)\dd x\\
 &=\sqrt{12}\left(
     \frac{\gamma(R)}{12}
     -\E\!\left[\frac{\gamma(R)}{12}\right]\right)\\
 &=\frac{\gamma(R)-\E\gamma(R)}{\sqrt{12}}.
\end{aligned}
\]
The second projection does not involve $\varepsilon$ because
\eqref{eq:component-mode1} has the same value for both orientations of the
component.

We also compute the mean of the left-hand side of Bessel's inequality. By the
definition of $M$ and Fubini's theorem,
\[
 \E\|Z_{\varepsilon,R}\|_2^2
 =\int\limits_0^1\E\bigl(C_{\varepsilon,R}(x)-M(x)\bigr)^2\dd x
 =\int\limits_0^1\Var(C_{\varepsilon,R}(x))\dd x
 =\mathcal V.
\]
Now average Bessel's inequality over $(\varepsilon,R)$. The means of the
squares of the two projections are, respectively,
$\frac14\Var(\varepsilon R)$ and $\frac1{12}\Var(\gamma(R))$. Hence
\begin{equation}\label{eq:Bessel-V}
 \mathcal V\geq\frac14\Var(\varepsilon R)
                   +\frac1{12}\Var(\gamma(R)).
\end{equation}
The right-hand side of \eqref{eq:Bessel-V} is precisely the left-hand side of
the inequality in the moment Lemma~\ref{lem:moment}. Thus that lemma gives
\[
 \frac14\Var(\varepsilon R)+\frac1{12}\Var(\gamma(R))
 \geq\frac{a^2-b}{6a}.
\]
Substituting this lower bound into \eqref{eq:Bessel-V}, we obtain
\begin{equation}\label{eq:V-lower}
              \mathcal V\geq\frac{a^2-b}{6a}.
\end{equation}

\step{Step 5: deficit of the boundary measure.}
The function $M$ is the distribution function of $\sigma_{\mathrm{bd}}$.
Therefore, Definition~\ref{def:gini-deficit} and
\eqref{eq:mixture-gini} give
\begin{equation}\label{eq:boundary-deficit}
 E_{\mathrm{bd}}=\int_0^1(M(x)-\tfrac12)^2\dd x-\frac1{12}
 =\frac{1/3-D_{\mathrm{bd}}}2
 =\frac a6-\mathcal V
 \leq\frac{b}{6a}.
\end{equation}
Thus, the deficit of the boundary measure is bounded from above by two explicit
moment characteristics, $a$ and $b$. This is exactly the form needed below:
for the interior measure we will obtain the reverse bound with the same
intermediate quantity $b/(6a)$.

\step{Step 6: internal measure deficit.}
It remains to compute the deficit of the density $H/d$. Recall from
\hyperref[step:mixture-representation]{Step~2} that
\[
                 \alpha\eq-\dd f,\qquad \beta\eq\dd g.
\]
These are finite nonnegative Stieltjes measures on $(0,1)$ because $f$ is
decreasing and $g$ is increasing. Every locally integrable function has
derivatives of all orders in the sense of distributions. Here $F$ and $G$ are
absolutely continuous, with $F'=f$ and $G'=g$ almost everywhere. Therefore,
for $H=F-G$, in the sense of distributions on $(0,1)$,
\[
 H'=f-g,\qquad
 H''=\dd f-\dd g=-\alpha-\beta.
\]
Moreover, $H(0)=H(1)=0$, since both $F$ and $G$ take the values $0$ and $1$
at the endpoints of the interval. Thus
\[
 \kappa\eq\alpha+\beta=-H''\geq0,\qquad H(0)=H(1)=0.
\]
The second derivative $H''$ need not exist pointwise: it is a distributional
derivative represented by a finite measure.

Set $A(s)\eq s(1-s)$. Recall also the mixing measures:
\[
 \pi_f(\dd s)=s\alpha(\dd s)+f(1)\delta_1(\dd s),\qquad
 \pi_g(\dd s)=(1-s)\beta(\dd s)+g(0)\delta_0(\dd s).
\]
Conditionally on $\varepsilon=1$, the variable $S=1-R$ has law $\pi_f$;
conditionally on $\varepsilon=-1$, the variable $R$ itself has law $\pi_g$.
Therefore
\begin{align*}
 \E[R\mid\varepsilon=1]
   &=\int_0^1(1-s)\,\pi_f(\dd s)
     =\int_0^1A(s)\,\alpha(\dd s),\\
 \E[R\mid\varepsilon=-1]
   &=\int_0^1s\,\pi_g(\dd s)
     =\int_0^1A(s)\,\beta(\dd s),
\end{align*}
Similarly, for the second moment under consideration,
\begin{align*}
 \E[R^2(1-R)\mid\varepsilon=1]
   &=\int_0^1(1-s)^2s\,\pi_f(\dd s)
     =\int_0^1A(s)^2\,\alpha(\dd s),\\
 \E[R^2(1-R)\mid\varepsilon=-1]
   &=\int_0^1s^2(1-s)\,\pi_g(\dd s)
     =\int_0^1A(s)^2\,\beta(\dd s).
\end{align*}
The atoms $f(1)\delta_1$ and $g(0)\delta_0$ do not contribute because the
corresponding factors vanish at $1$ and $0$. Averaging the conditional
expectations with equal weights and using the definitions $a=\E R$ and
$b=\E[R^2(1-R)]$, together with $d=a$ from
\eqref{eq:d-equals-a}, gives
\begin{equation}\label{eq:a-b-curvature}
 d=a=\frac12\int_0^1A(s)\,\kappa(\dd s),
 \qquad
 b=\frac12\int_0^1A(s)^2\,\kappa(\dd s).
\end{equation}

Let
\[
       N(x)\eq\frac1a\int_0^xH(u)\dd u
\]
be the distribution function of $\sigma_{\mathrm{in}}$, since $d=a$. Set
$Z(x)\eq N(x)-\tfrac12$ and
\[
       I\eq\int_0^1A(s)^2\,\kappa(\dd s)=2b.
\]
Since $\kappa=-H''$, and both the polynomial $A(x)^2$ and its first derivative
vanish at the endpoints,
\begin{align*}
 I&=-\int_0^1\bigl(A(x)^2\bigr)''H(x)\dd x\\
  &=\int_0^1\bigl(1-12(x-\tfrac12)^2\bigr)H(x)\dd x.
\end{align*}
Since $Z'(x)=H(x)/a$, $Z(0)=-1/2$, and $Z(1)=1/2$, integration by parts gives
\[
 \frac{I}{a}=24\int_0^1(x-\tfrac12)Z(x)\dd x-2.
\]
For brevity, set $t(x)=x-\tfrac12$. The last identity implies
\[
 2\int_0^1t(x)Z(x)\dd x=\frac{I}{12a}+\frac16,
 \qquad
 \int_0^1t(x)^2\dd x=\frac1{12}.
\]
Now expand the definition of $E_{\mathrm{in}}$. Since
$Z(x)=(N(x)-x)+t(x)$, we get
\begin{equation}\label{eq:interior-deficit-identity}
 \begin{aligned}
 E_{\mathrm{in}}
 &=\int_0^1Z(x)^2\dd x-\frac1{12}\\
 &=\int_0^1\bigl((N(x)-x)+t(x)\bigr)^2\dd x
   -\int_0^1t(x)^2\dd x\\
 &=\int_0^1\bigl(N(x)-x\bigr)^2\dd x
   +2\int_0^1t(x)Z(x)\dd x-2\int_0^1t(x)^2\dd x\\
 &=\frac{I}{12a}+\int_0^1\bigl(N(x)-x\bigr)^2\dd x\\
 &\geq\frac{I}{12a}=\frac{b}{6a}.
 \end{aligned}
\end{equation}
The last inequality follows from nonnegativity of the integral of a square, and
the last equality from $I=2b$; moreover, $a>0$ by
\eqref{eq:d-equals-a}.

\step{Step 7: comparison and passage to the limit.}
Combining \eqref{eq:boundary-deficit} and
\eqref{eq:interior-deficit-identity}, and writing
\[
 Q\eq\int_0^1\bigl(N(x)-x\bigr)^2\dd x,
\]
we obtain the sharper estimate
\begin{equation}\label{eq:strict-deficit-gap}
 E_{\mathrm{in}}-E_{\mathrm{bd}}\geq Q.
\end{equation}
In fact $Q>0$. Indeed, if $Q=0$, continuity of $N$ would imply $N(x)=x$
throughout $[0,1]$. Differentiating almost everywhere would give
$H(x)/a=1$ almost everywhere. Since $H=F-G$ is continuous, it would follow
that $H(x)=a$ for every $x$. But $H(0)=H(1)=0$, so $a=0$, contradicting
\eqref{eq:d-equals-a}. Therefore
\[
 E_{\mathrm{in}}>E_{\mathrm{bd}}.
\]
By the equivalence in the statement of the lemma, this means
$D_{\mathrm{in}}<D_{\mathrm{bd}}$, namely
\eqref{eq:one-dimensional-result}.

We now remove the boundedness assumption. For arbitrary integrable $f,g$, set
\[
 c_n\eq\int_0^1(f\wedge n)\dd x,\qquad
 \widetilde c_n\eq\int_0^1(g\wedge n)\dd x,
 \qquad
 f_n\eq\frac{f\wedge n}{c_n},\qquad
 g_n\eq\frac{g\wedge n}{\widetilde c_n}.
\]
Here $f\wedge n=\min\{f,n\}$. By the monotone convergence theorem,
$c_n,\widetilde c_n\to1$. Hence $f_n$ and $g_n$ are bounded probability
densities with the same monotonicity as $f$ and $g$, and $f_n\to f$ and
$g_n\to g$ in $L^1[0,1]$, because
\[
 \|f_n-f\|_1\leq2(1-c_n)\longrightarrow0,\qquad
 \|g_n-g\|_1\leq2(1-\widetilde c_n)\longrightarrow0.
\]

If $F_n,G_n$ are their distribution functions,
$H_n=F_n-G_n$, and $d_n=\int_0^1H_n$, then
\[
 \|F_n-F\|_\infty\leq\|f_n-f\|_1,\qquad
 \|G_n-G\|_\infty\leq\|g_n-g\|_1.
\]
Therefore $H_n\to H$ uniformly, $d_n\to d>0$, and
\[
 \frac{H_n}{d_n}\longrightarrow\frac Hd
 \quad\text{uniformly},\qquad
 \frac{f_n+g_n}{2}\longrightarrow\frac{f+g}{2}
  \quad\text{in }L^1[0,1].
\]
For all sufficiently large $n$ we set
\[
 N_n(x)\eq\frac1{d_n}\int_0^xH_n(u)\dd u,\qquad
 Q_n\eq\int_0^1\bigl(N_n(x)-x\bigr)^2\dd x.
\]
For the original densities, define
\[
 N(x)\eq\frac1d\int_0^xH(u)\dd u,\qquad
 Q\eq\int_0^1\bigl(N(x)-x\bigr)^2\dd x.
\]
The argument above proving $Q>0$ does not use boundedness of $f,g$, so it
applies here as well. Moreover, $N_n\to N$ uniformly, and hence $Q_n\to Q$.
For probability densities $p_n,p$ on $[0,1]$ we have
\[
\bigl|D(p_n(x)\dd x)-D(p(x)\dd x)\bigr|
 \leq2\|p_n-p\|_1,
\]
since $|x-y|\leq1$. Consequently, the corresponding quantities
$E_{\mathrm{in},n},E_{\mathrm{bd},n}$ also converge to
$E_{\mathrm{in}},E_{\mathrm{bd}}$. For the bounded densities $f_n,g_n$,
\eqref{eq:strict-deficit-gap} becomes
\[
 E_{\mathrm{in},n}-E_{\mathrm{bd},n}\geq Q_n.
\]
Passing to the limit gives
\[
 E_{\mathrm{in}}-E_{\mathrm{bd}}\geq Q>0.
\]
Thus \eqref{eq:one-dimensional-result} is proved for arbitrary
$f,g$.
\end{proof}

\begin{remark}[role of the boundedness assumption]
The boundedness of $f$ and $g$ is needed only in Steps~2 and~6. Since bounded
monotone functions have finite total variation, the Stieltjes measures
\[
 \alpha=-\dd f,\qquad \beta=\dd g
\]
are finite. This allows us to treat
\[
 \pi_f(\dd s)=s\alpha(\dd s)+f(1)\delta_1(\dd s),\qquad
 \pi_g(\dd s)=(1-s)\beta(\dd s)+g(0)\delta_0(\dd s)
\]
as finite mixing measures without further qualification and to apply Tonelli's
theorem. In Step~6 it also follows that
$\kappa=\alpha+\beta=-H''$ is a finite measure, so the integrals
\[
 \int_0^1A(s)\,\kappa(\dd s),\qquad
 \int_0^1A(s)^2\,\kappa(\dd s)
\]
are immediately well defined, as is the integration by parts used there.

If $f$ is unbounded, it may tend to infinity near $0$; an unbounded $g$ may
behave similarly near $1$. Then $\alpha$ or $\beta$ may have infinite total
mass, even though the weighted measures $s\alpha(\dd s)$ and
$(1-s)\beta(\dd s)$ remain finite. A direct proof would require separate
control near the endpoints. Step~7 avoids this technical issue using the
normalized truncations $f_n,g_n$: they are bounded, and their $L^1$ convergence
permits passage to the original $f,g$. Thus boundedness is only an auxiliary
technical assumption.
\end{remark}

\section{Completion of the proof}\label{sec:completion}

\begin{proof}[Proof of Theorem~\ref{thm:main}]
The moment Lemma~\ref{lem:moment}, proved in
Appendix~\ref{app:moment-proof}, gives estimate~\eqref{eq:V-lower}, from which
the one-dimensional Lemma~\ref{lem:one-dimensional} follows.
Proposition~\ref{prop:projection-reduction} applies that lemma to almost every
projection, and identity~\eqref{eq:cosine-representation} integrates the
resulting strict inequalities over all directions. Thus
$\E|I_1-I_2|<\E|B_1-B_2|$.
\end{proof}

\section{Conclusion}\label{sec:conclusion}

The result settles the original conjecture in its strict form. Equality is
impossible for a convex body with nonempty interior: in the one-dimensional
reduction it would force the quadratic remainder $Q$ to vanish, contradicting
$H(0)=H(1)=0$ and $d>0$. At the same time,
Remark~\ref{rem:strict-sharpness} shows that for thin rectangles the difference
between the mean distances tends to zero. Thus the strict inequality is sharp
in the limiting sense: even after normalizing the diameter, there is no
universal positive additive or multiplicative gap.

Several natural generalizations remain open. First, for centrally symmetric
bodies A.~S.~Lotnikov~\cite{Lotnikov2025} proved the comparison simultaneously
for all moments of order $p\geq1$. The principal open question is whether the
same remains true without central symmetry. Our one-dimensional argument relies
essentially on representation~\eqref{eq:gini-cdf}, which is specific to the
first moment, and it is not yet clear how to extend it to higher moments.

Second, the problem itself makes sense in $\R^d$ for $d\geq3$: one may compare
mean distances for the interior and boundary uniform measures of a convex
body. Our method exploits a specifically two-dimensional feature---the
monotonicity of a pair of boundary-projection densities---and a
higher-dimensional generalization would require a new geometric input.

Finally, under the assumption that the centroids coincide,
A.~S.~Tokmachev~\cite{Tokmachev2026} established a stronger comparison for
convex functions. It is natural to ask whether his approach can be combined
with the present method to remove the centroid condition. In any event, these
questions leave room for further research.

\appendix

\section{Proof of the moment lemma}\label{app:moment-proof}

Throughout this appendix we retain the notation of Lemma~\ref{lem:moment}.
The plan is as follows. We first study separately the convex envelope of an
auxiliary quartic, which controls the range $0<a<\tfrac12$. The proof then
proceeds in increasing order of complexity: the range $a\geq\tfrac12$ with a
single Bernstein certificate, the reduction of $0<a<\tfrac12$ to chords of the
envelope, and finally the atomic verification.

\subsection{Convex envelope of an auxiliary quartic}\label{app:envelope}

Let $a$ and $u$ be arbitrary parameters satisfying
$0<a<\tfrac12$ and
\begin{equation}\label{eq:u-range}
       1-a\leq u\leq\gamma(a)=1+a-2a^2
\end{equation}
(below, $a$ and $u$ will be the means $\E R$ and $\E\gamma(R)$, but this is
not used in the present subsection). Introduce the quartic
\begin{equation}\label{eq:F-def}
 F_{a,u}(r)\eq(3a+2)r^2-2r^3+a(\gamma(r)-u)^2
\end{equation}
and let $\widehat F\eq\co F_{a,u}$ be the greatest convex function on
$[0,1]$ that does not exceed $F_{a,u}$. We need to understand the arrangement
of its intervals of linearity. It turns out that there can be only one
nontrivial such interval: either an \emph{interior chord} with two interior
contact points, or a \emph{right chord} ending at the right endpoint of the
interval (Figure~\ref{fig:convex-envelope}).

\begin{figure}[ht]
\centering
\begin{tikzpicture}[x=.82cm,y=.82cm,>=Stealth]
\begin{scope}
 \draw[->] (-.25,0)--(6.35,0) node[right] {$r$};
 \draw[->] (0,-.15)--(0,2.7);
 \draw[thick,blue!65!black]
  plot[domain=0:1.2,samples=35]
       (\x,{.75+.35*(\x-1.2)+.10*(\x-1.2)^2});
 \draw[thick,blue!65!black]
  plot[domain=1.2:4.5,samples=70]
       (\x,{.75+.35*(\x-1.2)+.08*(\x-1.2)^2*(\x-4.5)^2});
 \draw[thick,blue!65!black]
  plot[domain=4.5:6,samples=35]
       (\x,{.75+.35*(\x-1.2)+.05*(\x-4.5)^2});
 \draw[very thick,red!70!black] (1.2,.75)--(4.5,1.905);
 \fill (1.2,.75) circle (1.6pt) node[below] {$x$};
 \fill (4.5,1.905) circle (1.6pt) node[below right] {$y$};
 \node[blue!65!black] at (2.8,2.15) {$F$};
 \node[red!70!black] at (2.9,1.08) {$\widehat F$};
 \node at (3,-.48) {(a) interior chord};
\end{scope}
\begin{scope}[xshift=7.2cm]
 \draw[->] (-.25,0)--(6.35,0) node[right] {$r$};
 \draw[->] (0,-.15)--(0,2.7);
 \draw[thick,blue!65!black]
  plot[domain=0:1.55,samples=40]
       (\x,{.72+.35*(\x-1.55)+.10*(\x-1.55)^2});
 \draw[thick,blue!65!black]
  plot[domain=1.55:6,samples=80]
       (\x,{.72+.35*(\x-1.55)+.05*(\x-1.55)^2*(6-\x)});
 \draw[very thick,red!70!black] (1.55,.72)--(6,2.2775);
 \fill (1.55,.72) circle (1.6pt) node[below] {$t$};
 \fill (6,2.2775) circle (1.6pt) node[below right] {$1$};
 \node[blue!65!black] at (3.8,2.35) {$F$};
 \node[red!70!black] at (4.0,1.35) {$\widehat F$};
 \node at (3,-.48) {(b) right chord};
\end{scope}
\end{tikzpicture}
\caption{The two possible nontrivial pieces of the convex envelope of a quartic
(schematically). Outside the chord shown, $\widehat F$ is the same as $F$.}
\label{fig:convex-envelope}
\end{figure}

\begin{lemma}[shape of the convex envelope]\label{lem:conv-envelope}
For $0<a<\tfrac12$ and \eqref{eq:u-range} the set
$\{r:\widehat F(r)<F_{a,u}(r)\}$ is either empty or one
interval. Its closure has one of two types:
\[
          [x,y],\quad 0<x<y<1,
          \qquad\text{or}\qquad [t,1],\quad 0<t<1.
\]
On this interval $\widehat F$ is a chord of $F_{a,u}$, and outside it
$\widehat F=F_{a,u}$.
\end{lemma}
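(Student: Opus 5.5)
We study the sign pattern of $F''_{a,u}$ on $[0,1]$, using the elementary fact that the convex envelope of a $C^2$ function on an interval can differ from it only on intervals where the function fails to be convex. Compute
\[
 F_{a,u}''(r)=2(3a+2)-12r+2a\bigl(\gamma'(r)^2+(\gamma(r)-u)\gamma''(r)\bigr),
\]
with $\gamma'(r)=1-4r$ and $\gamma''=-4$. This gives
\[
 F_{a,u}''(r)=2(3a+2)-12r+2a(1-4r)^2-8a(1+r-2r^2-u),
\]
a quadratic in $r$ with leading coefficient $32a+16a=48a>0$. Hence $F''_{a,u}$ is convex in $r$, so the set $\{F''<0\}$ is a single open interval, possibly empty or touching the endpoints. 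Thus $F_{a,u}$ is convex, then concave, then convex: at most one concave stretch.

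First, the standard structure result: if a $C^2$ function $\phi$ on $[0,1]$ has $\phi''\ge0$ outside one interval $J$, then $\{\widehat\phi<\phi\}$ is a union of open intervals. On each such component $\widehat\phi$ is affine, and at interior endpoints it touches $\phi$ tangentially. Each component must contain a point of $J$, since a chord over a region where $\phi$ is convex lies above $\phi$. On each component the function $\phi-\widehat\phi$ is $\geq0$, vanishes at the ends, and is positive inside. If two components existed, the affine pieces would meet with a slope increase at a contact point between them. At that contact point $\phi$ would be locally convex, and the concavity interval $J$ would have to meet both components on either side of it. With $\phi''$ having a single negative interval this is impossible, because a point between two parts of $J$ lies in $J$, and a contact point $c$ with $\phi=\widehat\phi$ near the convexity requirement forces $\phi''(c)\ge0$. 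I expect to make this rigorous by noting that $\phi-\widehat\phi$ on a component has interior maximum only in $J$, so each component contains a full neighborhood pattern convex–concave–convex. Hence there is at most one component, giving the "empty or one interval" claim and the chord property.

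Next, locate the possible endpoints. The left endpoint cannot be $0$. I would check that $F''_{a,u}(0)>0$ on the parameter range: $F''(0)=6a+4+2a-8a(1-u)=4+8a u\ge 4>0$, using $u\geq1-a>0$. Since $\widehat F\le F$ and $F$ is convex near $0$, and a chord from $0$ would need $0$ in the region of the concave part, the left endpoint is an interior tangency point $x>0$ or $t>0$. The right endpoint is either interior, giving type $[x,y]$, or equals $1$, giving type $[t,1]$. Excluding $[0,1]$ entirely follows from $x>0$.

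The main obstacle is the careful justification that a single negative interval of $F''$ yields a single linearity interval of the envelope, including the degenerate situations where the concave interval reaches $r=1$. In that case the right chord ends at $1$ without tangency. I would handle it via the supporting-line characterization: for each $r$, $\widehat F(r)=\sup\{\ell(r)\}$ over affine $\ell\le F$. Points where $F''(r)>0$ and $F$ lies above its tangent line globally are contact points, and the set of non-contact points is open and contained in the convex hull of the concavity region's "shadow". This yields connectedness, and hence the statement.
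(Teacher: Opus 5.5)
Your argument for \emph{at most one component, with $\widehat F$ affine on it} is sound in substance, and it is a more conceptual substitute for the paper's explicit slope elimination in Steps~2--3. To tighten it, two facts suffice. First, an interior contact point cannot lie in the closure of $\{F''_{a,u}<0\}$: on one side $F_{a,u}$ would be strictly concave and fall below its tangent line, which supports $\widehat F$. Second, each component contains a maximizer of $F_{a,u}-\widehat F$, where $F''_{a,u}\le0$. Hence every component contains the open concavity interval, so there is at most one component.

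The genuine gap is the exclusion of the left endpoint $0$, which is exactly what separates the lemma's two types from an arbitrary closure $[p,q]$. The inequality $F''_{a,u}(0)=4+8au>0$ only says that $F_{a,u}$ is convex near $0$. A linear piece of the envelope starting at $0$ needs no tangency there; it only needs $F'_{a,u}(0)$ to be at least the slope of the chord, and that is compatible with local convexity. So your claim that a chord from $0$ ``would need $0$ in the region of the concave part'' is false.

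A symptom is that you never use $a<\tfrac12$. Your argument needs only $a>0$ and $F''_{a,u}(0)>0$. Both hold for $a=1$, $u=0$, a pair satisfying $1-a\le u\le\gamma(a)$ since both bounds equal $0$, and there
\[
 F_{1,0}(r)=1+2r+2r^2-6r^3+4r^4,\qquad F_{1,0}''(0)=4.
\]
Take $t_0=\tfrac12+\tfrac{\sqrt3}{6}$ and let $L$ be the line through $(0,F_{1,0}(0))$ tangent to the graph at $t_0$. Then $F_{1,0}(r)-L(r)=2r(r-t_0)^2(2r+4t_0-3)$ with $4t_0-3=\tfrac{2\sqrt3-3}{3}>0$. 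Thus $L$ is a global minorant, $\widehat F=L<F_{1,0}$ on $(0,t_0)$, and the component has closure $[0,t_0]$.

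What is missing is the paper's Step~4. For a chord from $0$ tangent at an interior point $t$, the tangency condition $6at^2-(4a+2)t+2au+1=0$ gives the exact factorization $F_{a,u}(r)-L_{0,t}(r)=2r(r-t)^2(2ar+4at-2a-1)$. Its last factor at $r=0$ is $4at-2a-1\le 2a-1<0$, so this chord lies strictly above $F_{a,u}$ just to the right of $0$. This is precisely where $a<\tfrac12$ is used.

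The chord over all of $[0,1]$ has no interior tangency point and must be ruled out separately, via $m_{0,1}-F'_{a,u}(0)=4au>0$. Your remark that excluding $[0,1]$ follows from $x>0$ inherits the same gap. At the right end nothing like this is needed, since the type $[t,1]$ is allowed.
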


\begin{proof}
\step{Step 1: general structure of the envelope.}
We have
\begin{equation}\label{eq:F-second}
 \frac14F_{a,u}''(r)
 =12ar^2-(6a+3)r+2au+1.
\end{equation}
The right-hand side is a quadratic polynomial with positive leading
coefficient and a positive value at $r=0$. Therefore $F'$ first strictly
increases, may then strictly decrease, and finally strictly increases again,
where the last interval may be absent from $[0,1]$. In other words, the sign of
$F_{a,u}''$ changes at most twice, and the curvature pattern is no more
complicated than \enquote{convex---concave---convex}.

We use an elementary one-dimensional description of the convex envelope. It is
the one-dimensional form of the general formula
\cite[\S~17, Corollary~17.1.5]{Rockafellar1973}: two points $x,y$ suffice in
the convex combination, and the condition $r=\lambda x+(1-\lambda)y$
determines their weights uniquely. Continuity of $F$ on the compact interval
$[0,1]$ guarantees that the minimum is attained. Thus
\[
 \widehat F(r)=\min_{0\leq x\leq r\leq y\leq1}
 \left(\frac{y-r}{y-x}F(x)+\frac{r-x}{y-x}F(y)\right),
\]
where with $x=y=r$ the expression is understood as $F(r)$.
We record two consequences of this formula that will be needed below. Set
\[
 U\eq\{r\in[0,1]:\widehat F(r)<F(r)\}.
\]
The functions $F$ and $\widehat F$ are continuous, so $U$ is relatively open
in $[0,1]$. Every connected component of an open subset of the line is an
interval. We show that $\widehat F$ is affine on every such component.

Fix $r\in U$ and choose a pair $p\leq r\leq q$ attaining the minimum in the
formula for $\widehat F(r)$. The inequality $\widehat F(r)<F(r)$ implies
$p<r<q$, since if $p=r$ or $q=r$, the corresponding convex combination would
equal $F(r)$. Let
\[
 \ell_{p,q}(s)\eq
 \frac{q-s}{q-p}F(p)+\frac{s-p}{q-p}F(q)
\]
be the affine function whose graph contains the chord joining
$(p,F(p))$ and $(q,F(q))$. For $s\in[p,q]$ from the convexity $\widehat F$ and
the inequality $\widehat F\leq F$ imply
\[
 \widehat F(s)
 \leq\frac{q-s}{q-p}\widehat F(p)
     +\frac{s-p}{q-p}\widehat F(q)
 \leq\ell_{p,q}(s).
\]
At $s=r$, the first and last terms in this chain are equal by the choice of
$p,q$, so both inequalities are equalities. Therefore
$\ell_{p,q}-\widehat F$ is nonnegative, concave, and vanishes at the interior
point $r$. Such a function vanishes identically on $[p,q]$:
if $s<r<t$, then from concavity
\[
 0=(\ell_{p,q}-\widehat F)(r)
 \geq\lambda(\ell_{p,q}-\widehat F)(s)
 +(1-\lambda)(\ell_{p,q}-\widehat F)(t)\geq0
\]
for a suitable $\lambda\in(0,1)$, so both terms must be zero. Thus every point
of $U$ has a neighborhood on which $\widehat F$ is affine. On a connected
component these neighborhoods overlap, so their slopes agree and $\widehat F$
is affine on the entire component.

Suppose now that the closure of a component is $[x,y]$, where
$0<x<y<1$. At its ends
\[
 \widehat F(x)=F(x),\qquad \widehat F(y)=F(y),
\]
and on $[x,y]$ the function $\widehat F$ coincides with the line of slope
\[
 m\eq\frac{F(y)-F(x)}{y-x}.
\]
We explain why this line is tangent to the graph of $F$ at both points. At any
interior contact point $c$ where $\widehat F(c)=F(c)$, the nonnegative function
$F-\widehat F$ has a minimum. One-sided derivative inequalities and convexity
of $\widehat F$ give
\[
 F'(c)\leq\widehat F'_-(c)
 \leq\widehat F'_+(c)\leq F'(c).
\]
Thus all three quantities are equal. Applying this at $c=x$ and $c=y$ gives
\[
 F'(x)=m=F'(y)
       =\frac{F(y)-F(x)}{y-x}.
\]
Thus the chord is a \emph{common tangent}: the same line is tangent to the
graph of $F$ at both $x$ and $y$. We now classify all interior and endpoint
chords and thereby prove that $U$ has at most one component.

\pagebreak
\step{Step 2: interior chord.}
For brevity write $F=F_{a,u}$. Suppose that a chord is tangent to the graph at
two interior points $x<y$. Introduce its slope and supporting line:
\[
 m\eq\frac{F(y)-F(x)}{y-x},\qquad
 L_{x,y}(r)\eq F(x)+m(r-x).
\]
The tangency conditions have the form
\[
 F'(x)=m,\qquad F'(y)=m.
\]
We show explicitly how the auxiliary quantity $m$ is eliminated. Set
$G(r)\eq F(r)-L_{x,y}(r)$. Since the line passes through both contact points,
\[
 G(x)=G'(x)=G(y)=G'(y)=0.
\]
Thus $x$ and $y$ are double roots of the quartic $G$.
Expanding the definition of \eqref{eq:F-def} gives
\[
 F(r)=4ar^4-(4a+2)r^3+(4au+2)r^2
      +2a(1-u)r+a(1-u)^2.
\]
Subtracting the linear function $L_{x,y}$ leaves the three highest-degree
coefficients unchanged. Since the leading coefficient is $4a$, we obtain
\[
 G(r)=4a(r-x)^2(r-y)^2.
\]
Write $s=x+y$ and $p=xy$. Then
\[
 (r-x)^2(r-y)^2
 =r^4-2sr^3+(s^2+2p)r^2-2spr+p^2.
\]
Comparing the coefficients of $r^3$ gives
\[
 -8as=-(4a+2),\qquad
 s=\frac{2a+1}{4a}.
\]
Comparing the coefficients of $r^2$ gives
\[
 4a(s^2+2p)=4au+2,\qquad
 p=\frac12\left(u+\frac1{2a}-s^2\right).
\]
Finally,
\[
 (y-x)^2=(x+y)^2-4xy
 =s^2-4p
 =3s^2-2u-\frac1a.
\]
Substituting the value of $s$ gives
\begin{equation}\label{eq:internal-tangent-data}
 x+y=\frac{2a+1}{4a},\qquad
 (y-x)^2=\frac{-32a^2u+12a^2-4a+3}{16a^2}.
\end{equation}
We will refer to this elementary algebraic elimination of the auxiliary
variable $m$ as \emph{elimination of the slope}. Formulas
\eqref{eq:internal-tangent-data} determine the unordered pair $\{x,y\}$
uniquely: if $d=y-x>0$, then
\[
 x=\frac{s-d}{2},\qquad y=\frac{s+d}{2}.
\]
Hence, if an interior pair of tangency points exists, it is unique. The same
calculation also gives the exact factorization
\begin{equation}\label{eq:internal-chord-factor}
 F(r)-L_{x,y}(r)=4a(r-x)^2(r-y)^2.
\end{equation}
Its right-hand side is nonnegative, so the interior chord does lie below the
graph. Differentiating \eqref{eq:internal-chord-factor} twice at $x,y$ gives
\[
 F''(x)=F''(y)=8a(y-x)^2>0.
\]
The axis of symmetry of the quadratic polynomial \eqref{eq:F-second} is
\[
 \frac{2a+1}{8a}=\frac{x+y}{2}.
\]
Since its leading coefficient is positive and its values at $x,y$ are positive,
$F''\geq0$ on $[0,x]$ and $[y,1]$. Thus $F$ is convex on these two intervals.

Define $\Phi$ to equal $F$ outside $[x,y]$ and $L_{x,y}$ on $[x,y]$.
Its slopes agree at the junctions because $F'(x)=m=F'(y)$; hence $\Phi$ is
convex. Formula~\eqref{eq:internal-chord-factor} gives $\Phi\leq F$. Finally,
if $h\leq F$ is any convex function, then for $r\in[x,y]$
\[
 h(r)\leq\frac{y-r}{y-x}h(x)+\frac{r-x}{y-x}h(y)
 \leq\frac{y-r}{y-x}F(x)+\frac{r-x}{y-x}F(y)
 =L_{x,y}(r).
\]
Outside $[x,y]$ we also have $h\leq F=\Phi$. Thus $h\leq\Phi$ throughout
$[0,1]$, and hence $\Phi=\widehat F$. Consequently, if an interior chord
exists, the set $U$ has no second component.

\step{Step 3: right chord.}
Let a chord join $(t,F(t))$ and $(1,F(1))$, where $t<1$. Write
\[
 m_{t,1}\eq\frac{F(1)-F(t)}{1-t},\qquad
 L_{t,1}(r)\eq F(t)+m_{t,1}(r-t).
\]
The interior endpoint $t$ is a point of tangency, so $F'(t)=m_{t,1}$.
Direct substitution of the polynomial $F$ gives
\[
 F'(t)-m_{t,1}
 =4(t-1)(3at^2-t+au).
\]
Since $t<1$, the tangency condition is equivalent to the equation
\begin{equation}\label{eq:right-tangent-data}
                 3at^2-t+au=0.
\end{equation}
Since $a>0$ and $u\geq1-a>0$, the value $t=0$ does not satisfy
\eqref{eq:right-tangent-data}; hence here $0<t<1$. Substituting
\eqref{eq:right-tangent-data} into $F-L_{t,1}$ and collecting factors gives
the exact factorization
\begin{equation}\label{eq:right-chord-factor}
 F(r)-L_{t,1}(r)=2(r-1)(r-t)^2(2ar+4at-1).
\end{equation}
For $r\in[t,1]$, the first two factors on the right satisfy
$r-1\leq0$ and $(r-t)^2\geq0$, while the last factor is increasing in $r$.
Therefore the entire difference is nonnegative on $[t,1]$ if and only if its
last factor is nonpositive at $r=1$, that is,
\[
 2a+4at-1\leq0.
\]
If this condition fails, the difference is negative immediately to the left of
$1$, and the chord is not a minorant.

We also prove uniqueness of the admissible root of
\eqref{eq:right-tangent-data}. If $a<\tfrac16$, the sum of the two roots of
the quadratic equation is $1/(3a)>2$, so they cannot both lie in $[0,1]$.
If $a\geq\tfrac16$, the preceding chord condition gives
\[
 t\leq\frac{1-2a}{4a}\leq\frac1{6a},
\]
and the polynomial $3at^2-t+au$ is strictly decreasing to the left of its
vertex $1/(6a)$. Thus in either case there is at most one admissible root.

\step{Step 4: exclusion of chords issuing from zero.}
It remains to consider a chord joining $(0,F(0))$ to an interior point
$(t,F(t))$, where $t\in(0,1)$. Its slope and line are
\[
 m_{0,t}\eq\frac{F(t)-F(0)}{t},\qquad
 L_{0,t}(r)\eq F(0)+m_{0,t}r.
\]
Tangency at $t$ requires $F'(t)=m_{0,t}$. Direct calculation gives
\[
 F'(t)-m_{0,t}
 =2t\bigl(6at^2-(4a+2)t+2au+1\bigr).
\]
Since $t>0$, the tangency condition is equivalent to
\[
 6at^2-(4a+2)t+2au+1=0.
\]
Substituting this identity, the difference between $F(r)$ and the chord
factorizes as
\begin{equation}\label{eq:left-chord-factor}
 F(r)-L_{0,t}(r)
 =2r(r-t)^2(2ar+4at-2a-1).
\end{equation}
At $r=0$, the last factor is at most
\[
 4a-2a-1=2a-1<0.
\]
It therefore remains negative for all sufficiently small $r>0$. Choosing also
$r<t$, formula~\eqref{eq:left-chord-factor} yields
$F(r)-L_{0,t}(r)<0$. Thus the chord lies above $F$ in a right neighborhood of
zero and is not a minorant.

It remains to consider the chord between the endpoints $0$ and $1$. Its slope
exceeds the derivative of $F$ at zero:
\[
 m_{0,1}-F'(0)
 =\frac{F(1)-F(0)}{1-0}-F'(0)=4au>0.
\]
Thus $L_{0,1}-F$ vanishes at zero and has positive right derivative there.
Consequently, $L_{0,1}(r)>F(r)$ for all sufficiently small $r>0$, so this
chord is not a minorant either.

The classification is complete. If a component of $U$ has two interior
endpoints, it is the unique interior chord from Step~2. If no interior pair
exists, every nonempty component must be adjacent to $0$ or to $1$. The first
possibility was excluded in this step, while in the second there is at most one
right chord by Step~3. Hence $U$ is either empty or consists of exactly one
interval of the type specified in the lemma. This proves the lemma.
\end{proof}

\subsection{Range \texorpdfstring{$a\geq\tfrac12$}{a >= 1/2}}

\begin{proof}[Proof of Lemma~\ref{lem:moment}]
Recall the definitions
\[
 a\eq\E R,\qquad b\eq\E\bigl[R^2(1-R)\bigr].
\]
Since $a>0$, multiplying both sides of \eqref{eq:moment-main} by $12a$
rewrites the assertion equivalently as
\begin{equation}\label{eq:moment-cleared}
 a\left(3\Var(\varepsilon R)+\Var(\gamma(R))\right)
 \geq2(a^2-b).
\end{equation}

First suppose that $a\geq\tfrac12$. Write
\[
 e\eq\E(\varepsilon R),\quad u\eq\E\gamma(R),\quad
 c\eq\E\bigl[R^3(1-R)\bigr].
\]
The identity
\begin{equation}\label{eq:gamma-identity}
 3r^2+\gamma(r)^2+4r^3(1-r)=1+2r
\end{equation}
may be averaged at $r=R$. By the definitions of $a$ and $c$, this gives
\[
 3\E R^2+\E\!\left[\gamma(R)^2\right]=1+2a-4c.
\]
Moreover, since $\varepsilon^2=1$,
\[
 \Var(\varepsilon R)=\E R^2-e^2,\qquad
 \Var(\gamma(R))=\E\!\left[\gamma(R)^2\right]-u^2.
\]
Therefore,
\[
 3\Var(\varepsilon R)+\Var(\gamma(R))
 =1+2a-4c-3e^2-u^2.
\]
Substituting this identity into \eqref{eq:moment-cleared} gives the following
equivalent transformations:
\begin{align*}
 a\bigl(1+2a-4c-3e^2-u^2\bigr)&\geq2(a^2-b),\\
 3ae^2+au^2&\leq a+2b-4ac.
\end{align*}
Finally, division by $a>0$ shows that \eqref{eq:moment-cleared} is equivalent
to
\begin{equation}\label{eq:mean-vector-bound}
 3e^2+u^2\leq 1+\frac{2b}{a}-4c.
\end{equation}

Consider a random vector
\[
 W\eq\left(\sqrt3\,\varepsilon(R-\tfrac12),\;
          \gamma(R)+R-a\right).
\]
Since $\E\varepsilon=0$, the definitions of $a,e,u$ give
\[
 \E W
 =\left(\sqrt3\bigl(\E(\varepsilon R)-\tfrac12\E\varepsilon\bigr),\;
         \E\gamma(R)+\E R-a\right)
 =(\sqrt3e,u).
\]
Apply Jensen's inequality to the convex function
$\Phi(w)=\lVert w\rVert^2$ on $\mathbb R^2$:
\[
 3e^2+u^2=\lVert\E W\rVert^2\leq\E\lVert W\rVert^2.
\]
Since $\varepsilon^2=1$,
\[
 \lVert W\rVert^2
 =3(R-\tfrac12)^2+\bigl(\gamma(R)+R-a\bigr)^2.
\]
It is therefore enough to prove, for every $r\in[0,1]$, the pointwise
inequality
\begin{equation}\label{eq:pointwise-large-a}
 3(r-\tfrac12)^2+(\gamma(r)+r-a)^2
 \leq 1+\frac{2r^2(1-r)}a-4r^3(1-r).
\end{equation}
Indeed, substituting $r=R$ in \eqref{eq:pointwise-large-a} and averaging gives,
by the definitions of $b,c$,
\[
 \E\lVert W\rVert^2
 \leq1+\frac2a\E\bigl[R^2(1-R)\bigr]
        -4\E\bigl[R^3(1-R)\bigr]
 =1+\frac{2b}{a}-4c.
\]
Together with Jensen's inequality this gives
\[
 3e^2+u^2
 =\lVert\E W\rVert^2
 \leq\E\lVert W\rVert^2
 \leq1+\frac{2b}{a}-4c,
\]
which is exactly \eqref{eq:mean-vector-bound}.

Let $t\eq2a-1\in[0,1]$. The difference between the right- and left-hand sides
of \eqref{eq:pointwise-large-a} is
\[
 \frac{\mathcal B(r,t)}{4(t+1)},
\]
where
\begin{align*}
\mathcal B(r,t)={}&16r^3t-8r^2t^2-28r^2t-4r^2+8rt^2+12rt+4r\\
         &{}-t^3+t^2+2t.
\end{align*}
Thus the entire case $a\geq\tfrac12$ reduces to proving that the polynomial
$\mathcal B(r,t)$ is nonnegative on the square $[0,1]^2$: the denominator
$4(t+1)$ is positive, and as $a$ varies, $t=2a-1$ ranges over $[0,1]$.
The total degree of $\mathcal B$ is $4$, while its degrees in $r$ and $t$
separately are at most $3$; we abbreviate this as coordinate degree $(3,3)$.
We verify nonnegativity using the Bernstein basis.

\medskip
\noindent\textbf{One-dimensional Bernstein basis.}
For a fixed $n\geq0$ we set
\[
 B_{k,n}(x)\eq\binom nk x^k(1-x)^{n-k},
 \qquad k=0,\dots,n.
\]
These polynomials form a basis of the space of polynomials of degree at most
$n$. They are nonnegative on $[0,1]$, and the binomial theorem gives
\[
 \sum_{k=0}^n B_{k,n}(x)=1.
\]
The connection with the usual monomial basis is given by the triangular formula
\[
 x^j=\sum_{k=j}^n
 \frac{\binom{k}{j}}{\binom{n}{j}}\,B_{k,n}(x),
 \qquad 0\leq j\leq n.
\]
Thus every polynomial $p$ of degree at most $n$ has a unique representation
\[
 p(x)=\sum_{k=0}^n\beta_k B_{k,n}(x).
\]
For fixed $x\in[0,1]$, this value is a convex combination of the numbers
$\beta_k$; therefore
\[
 \min_k\beta_k\leq p(x)\leq\max_k\beta_k.
\]
In particular, nonnegativity of all $\beta_k$ is sufficient for nonnegativity
of $p$ on $[0,1]$.

\medskip
\noindent\textbf{Tensor basis.}
For $d$ variables $z=(z_1,\dots,z_d)$ and coordinate degree
$D=(D_1,\dots,D_d)$, set
\[
 B_{k,D}(z)\eq\prod_{i=1}^d B_{k_i,D_i}(z_i),
 \qquad 0\leq k_i\leq D_i.
\]
These tensor products form a basis of the space of polynomials satisfying
$\deg_{z_i}\Pi\leq D_i$. If
\[
 \Pi(z)=\sum_\alpha c_\alpha z^\alpha
       =\sum_{0\leq k\leq D}\beta_k B_{k,D}(z),
\]
then applying the preceding one-dimensional formula coordinatewise gives
\begin{equation}\label{eq:bernstein-coeff-formula}
 \beta_k=\sum_{\alpha\leq k}c_\alpha
       \prod_{i=1}^d
       \frac{\binom{k_i}{\alpha_i}}{\binom{D_i}{\alpha_i}},
\end{equation}
where inequalities between multi-indices are interpreted coordinatewise. The
tensor-product basis polynomials are nonnegative on $[0,1]^d$ and sum to one.
We therefore call an expansion with all coefficients nonnegative a
\emph{certificate of nonnegativity in the Bernstein basis}. If the original and
resulting coefficients are rational and are computed exactly, this is an
\emph{exact rational certificate}. Standard properties of the basis and their
proofs may be found in~\cite[Section~2]{MichelucciFoufouKubicki2012}.

In our case, \enquote{the tensor basis of degree $(3,3)$} means the expansion
\[
 \mathcal B(r,t)
 =\sum_{k_r=0}^3\sum_{k_t=0}^3
   \beta_{k_r,k_t}B_{k_r,3}(r)B_{k_t,3}(t).
\]
In the following table, the row indexed by $k_r$ and the column indexed by
$k_t$ contain the coefficient $\beta_{k_r,k_t}$:
\[
\begin{array}{c|cccc}
 & k_t=0 & k_t=1 & k_t=2 & k_t=3\\
\hline
k_r=0 & 0&2/3&5/3&2\\
k_r=1 & 4/3&10/3&59/9&10\\
k_r=2 & 4/3&14/9&3&14/3\\
k_r=3 & 0&2/3&5/3&2
\end{array}.
\]
Thus the table is an exact rational certificate: all of its entries are
nonnegative, and hence $\mathcal B\geq0$ on $[0,1]^2$.
This proves the range $a\geq\tfrac12$.

\newpage
\subsection{Range \texorpdfstring{$0<a<\tfrac12$}{0 < a < 1/2}: reduction to chords of the envelope}
\label{app:chord-reduction}

\emph{Conditional laws and averaging.}
Let $P_+$ and $P_-$ denote the conditional laws of $R$ given
$\varepsilon=1$ and $\varepsilon=-1$. For any two probability measures
$P,Q$ on $[0,1]$, introduce averaging with respect to their half-sum:
\begin{equation}\label{eq:bar-expectation}
 \overline{\E}_{P,Q}\!\left[\phi(R)\right]
 \eq\frac12\left(\int\phi(r)P(\dd r)+\int\phi(r)Q(\dd r)\right).
\end{equation}
For the original pair we abbreviate
$\overline{\E}=\overline{\E}_{P_+,P_-}$. Since the two values of
$\varepsilon$ are equally likely, the law of total expectation gives
\begin{align*}
 \E\phi(R)
 &=\overline{\E}[\phi(R)],\\
 \E\bigl[\varepsilon\phi(R)\bigr]
 &=\frac12\left(\int\phi(r)P_+(\dd r)
                 -\int\phi(r)P_-(\dd r)\right).
\end{align*}
Thus, for functions of $R$ alone, $\E$ and $\overline{\E}$ have the same
numerical value. The indexed notation \eqref{eq:bar-expectation} allows us to
use the same formulas after replacing the pair of measures.

Set
\[
 p\eq\int r\,P_+(\dd r),\qquad
 q\eq\int r\,P_-(\dd r).
\]
The parameters $a,e,u$ introduced above are expressed through the conditional
laws as
\[
 a=\overline{\E}[R]=\frac{p+q}{2},\qquad
 e=\E(\varepsilon R)=\frac{p-q}{2},\qquad
 u=\overline{\E}[\gamma(R)].
\]
Therefore,
\[
 p=a+e,\qquad q=a-e.
\]
If necessary, replacing $\varepsilon$ by $-\varepsilon$ interchanges $P_+$
and $P_-$ and replaces $e$ by $-e$. We may therefore assume $p\geq q$, that
is, $e\geq0$.

By Jensen’s inequality for the concave function $\gamma$ and by the estimate
$\gamma(r)\geq1-r$ we have
\[
 u\leq\gamma(\overline{\E}[R])=\gamma(a),
 \qquad
 u\geq\overline{\E}[1-R]=1-a.
\]
Thus the parameters $a,u$ satisfy \eqref{eq:u-range}, so the results of
Subsection~\ref{app:envelope} apply to the quartic $F_{a,u}$ from
\eqref{eq:F-def}.

\emph{Polynomial form of the deficit.}
Define $\mathcal N(P_+,P_-)$ as the difference between the left- and
right-hand sides of \eqref{eq:moment-cleared}. Using
\[
 \Var(\varepsilon R)=\overline{\E}\!\left[R^2\right]-e^2,\qquad
 \Var(\gamma(R))
 =\overline{\E}\!\left[\gamma(R)^2\right]-u^2,\qquad
 b=\overline{\E}\bigl[R^2(1-R)\bigr],
\]
and then $\overline{\E}[\gamma(R)]=u$, we obtain
\begin{equation}\label{eq:N-as-F}
\begin{aligned}
 \mathcal N(P_+,P_-)
 &\eq
 a\left(3\Var(\varepsilon R)+\Var(\gamma(R))\right)
   -2(a^2-b)\\
 &=a\left(
      3\bigl(\overline{\E}[R^2]-e^2\bigr)
      +\overline{\E}[\gamma(R)^2]-u^2
    \right)
   -2a^2+2\overline{\E}[R^2-R^3]\\
 &=\overline{\E}\!\left[
      (3a+2)R^2-2R^3+a\gamma(R)^2
    \right]
   -2a^2-3ae^2-au^2\\
 &=\overline{\E}\!\left[
      (3a+2)R^2-2R^3+a\bigl(\gamma(R)-u\bigr)^2
    \right]
   -2a^2-3ae^2\\
 &=\frac12\left(\int F_{a,u}(r)P_+(\dd r)
                 +\int F_{a,u}(r)P_-(\dd r)\right)
   -2a^2-3ae^2.
\end{aligned}
\end{equation}

\emph{Passage to the convex envelope.}
By the definition of the convex envelope and Lemma~\ref{lem:conv-envelope},
for every $r\in[0,1]$ there is a probability measure $Q_r$ with mean $r$ such
that
\begin{equation}\label{eq:envelope-realize}
 \int F_{a,u}(s)Q_r(\dd s)=\widehat F(r).
\end{equation}
Indeed, if $\widehat F(r)=F_{a,u}(r)$, take
$Q_r=\delta_r$. If $r$ lies on the linear chord $[x,y]$, then
\[
 Q_r=\frac{y-r}{y-x}\,\delta_x+\frac{r-x}{y-x}\,\delta_y
\]
is the unique mixture of $\delta_x,\delta_y$ with mean $r$.

Since $\widehat F\leq F_{a,u}$ and $\widehat F$ is convex, Jensen's inequality
gives, for each of the measures $P_\pm$,
\[
 \int F_{a,u}(r)P_\pm(\dd r)
 \geq\int\widehat F(r)P_\pm(\dd r)
 \geq\widehat F\left(\int r\,P_\pm(\dd r)\right).
\]
Substituting these estimates into \eqref{eq:N-as-F} gives
\begin{equation}\label{eq:envelope-lower}
 \mathcal N(P_+,P_-)
 \geq \frac12\bigl(\widehat F(p)+\widehat F(q)\bigr)-2a^2-3ae^2.
\end{equation}

\emph{Replacement of the measures.}
Take $\widetilde P_+=Q_p$ and $\widetilde P_-=Q_q$ from
\eqref{eq:envelope-realize}. Their means are $p$ and $q$, respectively, so
$a=(p+q)/2$ and $e=(p-q)/2$ do not change. The mean of the nonlinear
function $\gamma$, however, may change; denote it by
\[
 \widetilde u
 \eq\overline{\E}_{\widetilde P_+,\widetilde P_-}
       \!\left[\gamma(R)\right]
 =\frac12\left(\int\gamma(r)\widetilde P_+(\dd r)
                +\int\gamma(r)\widetilde P_-(\dd r)\right).
\]
This is the source of the correction in the following identity. The measures
$Q_p,Q_q$ in \eqref{eq:envelope-realize} were constructed for the original
polynomial $F_{a,u}$, and therefore
\[
 \frac12\bigl(\widehat F(p)+\widehat F(q)\bigr)
 =\frac12\sum_{\sigma\in\{+,-\}}
   \int F_{a,u}(r)\widetilde P_\sigma(\dd r).
\]
On the other hand, formula~\eqref{eq:N-as-F} applied to the new pair of
measures contains its own parameter $\widetilde u$:
\[
 \mathcal N(\widetilde P_+,\widetilde P_-)
 =\frac12\sum_{\sigma\in\{+,-\}}
   \int F_{a,\widetilde u}(r)\widetilde P_\sigma(\dd r)
  -2a^2-3ae^2.
\]
The difference between the two averaged polynomials is
\[
\begin{aligned}
 &\frac12\sum_{\sigma\in\{+,-\}}
   \int\bigl(F_{a,u}(r)-F_{a,\widetilde u}(r)\bigr)
        \widetilde P_\sigma(\dd r)\\
 &\quad
 =\frac a2\sum_{\sigma\in\{+,-\}}
   \int\left((\gamma(r)-u)^2
             -(\gamma(r)-\widetilde u)^2\right)
        \widetilde P_\sigma(\dd r)\\
 &\quad
 =a\left(2(\widetilde u-u)\widetilde u
          +u^2-\widetilde u^2\right)
 =a(\widetilde u-u)^2.
\end{aligned}
\]
The penultimate equality uses the definition of $\widetilde u$ and the fact
that both measures $\widetilde P_\pm$ have total mass $1$.
Therefore,
\begin{equation}\label{eq:replacement-identity}
 \frac12\bigl(\widehat F(p)+\widehat F(q)\bigr)-2a^2-3ae^2
 =\mathcal N(\widetilde P_+,\widetilde P_-)
  +a(\widetilde u-u)^2.
\end{equation}
The correction vanishes only in the special case $\widetilde u=u$; preserving
the means $p,q$ alone does not ensure this equality. Since $a>0$, the last
term is nonnegative. Combining \eqref{eq:envelope-lower} and
\eqref{eq:replacement-identity}, we obtain
\[
 \mathcal N(P_+,P_-)
 \geq\mathcal N(\widetilde P_+,\widetilde P_-)
     +a(\widetilde u-u)^2
 \geq\mathcal N(\widetilde P_+,\widetilde P_-).
\]
It is therefore enough to prove $\mathcal N(Q_p,Q_q)\geq0$ for all pairs of
canonical measures used to realize \eqref{eq:envelope-realize}. Each is either
a point mass at a contact point where $\widehat F=F_{a,u}$ or a two-point
mixture at the endpoints of the unique chord of the envelope.

\subsection{Final atomic verification of the moment lemma}\label{app:atomic-check}

In Subsection~\ref{app:chord-reduction}, for each $r$ we chose a canonical
measure $Q_r$ attaining the value $\widehat F(r)$ in
\eqref{eq:envelope-realize}. By Lemma~\ref{lem:conv-envelope}, it has one of
two forms:
$Q_r=\delta_r$ at the point of coincidence $\widehat F(r)=F_{a,u}(r)$ or
\[
 Q_r=(1-\lambda)\delta_\xi+\lambda\delta_\eta,
 \qquad
 r=(1-\lambda)\xi+\lambda\eta,
\]
where $[\xi,\eta]$ is the unique nontrivial chord of the envelope. Thus, after
the reduction in Subsection~\ref{app:chord-reduction}, only point masses and
two-point mixtures with the same pair of chord endpoints remain.

For a pair of such measures, denote them by $P,Q$ and, retaining the notation
of A.3, set
\begin{equation}\label{eq:atomic-notation}
\begin{aligned}
 p&\eq\int r\,P(\dd r),&
 q&\eq\int r\,Q(\dd r),&
 a&\eq\frac{p+q}{2},&
 e&\eq\frac{p-q}{2}.
\end{aligned}
\end{equation}
In this subsection, abbreviate $\overline{\E}=\overline{\E}_{P,Q}$, where
$\overline{\E}_{P,Q}$ is defined in \eqref{eq:bar-expectation}. The difference
between the left- and right-hand sides of \eqref{eq:moment-cleared} for the
current pair is
\begin{equation}\label{eq:atomic-N}
 \mathcal N(P,Q)\eq
 a\left(3\bigl(\overline\E[R^2]-e^2\bigr)
 +\overline\E[\gamma(R)^2]
 -\bigl(\overline\E[\gamma(R)]\bigr)^2\right)
 -2\left(a^2-\overline\E[R^2(1-R)]\right).
\end{equation}
This is the same quantity $\mathcal N$ as in A.3, now written for an arbitrary
current pair $P,Q$.

\emph{Both measures are point masses.}
If $P=\delta_p$ and $Q=\delta_q$, direct substitution into
\eqref{eq:atomic-N} gives
\begin{equation}\label{eq:dirac-gap}
 \mathcal N(\delta_p,\delta_q)
 =a^3+e^2(16a^3-8a^2-5a+2).
\end{equation}
Here $|e|\leq a$ because $p,q\geq0$. If the coefficient of $e^2$ is
nonnegative, then the right-hand side of \eqref{eq:dirac-gap} is nonnegative.
If that coefficient is negative, $e^2\leq a^2$ gives
\[
 \mathcal N(\delta_p,\delta_q)
 \geq a^3+a^2(16a^3-8a^2-5a+2)
 =2a^2(2a-1)^2(2a+1)\geq0.
\]
Thus the case of two point masses is completely settled. This covers both the
absence of a nontrivial chord and the case in which both selected points lie
outside its interior.

\emph{At least one measure has two points.}
By Lemma~\ref{lem:conv-envelope}, the unique chord is either $[t,1]$ or
$[x,y]$ with $0<x<y<1$. In the first case, a measure on the chord is a
mixture of $\delta_t,\delta_1$, and a point mass outside the chord can lie only
in $[0,t]$. In the second case, a measure on the chord is a mixture of
$\delta_x,\delta_y$, and a point mass outside the chord lies either in $[0,x]$
or in $[y,1]$. If both measures have two points, they use the endpoints of the
same chord.

These possibilities give exactly the five rows in the following table.
Interchanging $P$ and $Q$ creates no new cases because
$\mathcal N(P,Q)=\mathcal N(Q,P)$. The parameters
$t,x,\tau,\lambda,\mu,s$ independently range over $[0,1]$, and
\[
 y=x+(1-x)\tau.
\]
The parametrizations $ts$, $xs$, and $y+(1-y)s$ describe points in $[0,t]$,
$[0,x]$, and $[y,1]$, respectively. In each of the five cases we deliberately
verify the inequality on the entire closed parameter cube $[0,1]^k$, including
its boundary and the corresponding degenerate cases; this only enlarges the set
that must be checked. Moreover, we do not impose the tangency equations relating
the endpoints of the actual chord to the original parameters $a,u$ of
$F_{a,u}$. Nonnegativity on this larger family is more than sufficient for the
canonical measures in \eqref{eq:envelope-realize}.

\begin{center}
\small
\begin{tabular}{@{}>{\raggedright\arraybackslash}p{31mm}
                    >{\raggedright\arraybackslash}p{41mm}
                    >{\raggedright\arraybackslash}p{41mm}
                    >{\raggedright\arraybackslash}p{32mm}@{}}
\toprule
case & $P$ & $Q$ & variables: powers\\
\midrule
(I) both measures on the right chord
& $(1-\lambda)\delta_t+\lambda\delta_1$
& $(1-\mu)\delta_t+\mu\delta_1$
& $(t,\lambda,\mu):(5,3,3)$\\[1mm]
(II) right chord and point on the left
& $(1-\lambda)\delta_t+\lambda\delta_1$
& $\delta_{ts}$
& $(t,\lambda,s):(7,5,7)$\\[1mm]
(III) both measures on the inner chord
& $(1-\lambda)\delta_x+\lambda\delta_y$
& $(1-\mu)\delta_x+\mu\delta_y$
& $(x,\tau,\lambda,\mu):(6,6,4,4)$\\[1mm]
(IV) internal chord and point on the left
& $(1-\lambda)\delta_x+\lambda\delta_y$
& $\delta_{xs}$
& $(x,\tau,\lambda,s):(7,7,5,7)$\\[1mm]
(V) internal chord and point to the right
& $(1-\lambda)\delta_x+\lambda\delta_y$
& $\delta_{y+(1-y)s}$
& $(x,\tau,\lambda,s):(6,6,4,6)$\\
\bottomrule
\end{tabular}
\end{center}

\emph{Exact polynomial test.}
For each row, substitute the indicated atomic measures into
\eqref{eq:atomic-N} and denote the resulting polynomial in the parameters by
$\Pi_{\mathrm I},\ldots,\Pi_{\mathrm V}$. The required inequality in that row
is equivalent to nonnegativity of the corresponding $\Pi_j$ on the unit cube.

The last column of the first table specifies the coordinate degree of the
tensor-product Bernstein basis. For some variables it exceeds the natural
degree of the polynomial; this is standard degree elevation, used to obtain
nonnegative coefficients.

\medskip
\noindent\emph{How degree elevation works.}
The polynomial itself does not change; only its coordinates in the
Bernstein basis. From the equality $1=(1-x)+x$ it immediately follows
that
\begin{equation}\label{eq:bernstein-degree-elevation}
 B_{k,n}(x)
 =\frac{n+1-k}{n+1}B_{k,n+1}(x)
  +\frac{k+1}{n+1}B_{k+1,n+1}(x).
\end{equation}
Therefore, if
\[
 p(x)=\sum_{k=0}^{n}b_kB_{k,n}(x)
     =\sum_{k=0}^{n+1}c_kB_{k,n+1}(x),
\]
then the new coefficients have the form
\[
 c_0=b_0,\qquad
 c_k=\frac{k}{n+1}b_{k-1}
     +\left(1-\frac{k}{n+1}\right)b_k
       \quad(1\leq k\leq n),\qquad
 c_{n+1}=b_n.
\]
Thus each new interior coefficient is a convex combination of two adjacent
old coefficients.

Consider an example unrelated to the present problem:
\[
 p(x)=\left(x-\frac12\right)^2+\frac1{10}
     =x^2-x+\frac7{20}.
\]
In the natural degree basis $2$ it is written as
\[
 p(x)=\frac7{20}B_{0,2}(x)
      -\frac3{20}B_{1,2}(x)
      +\frac7{20}B_{2,2}(x).
\]
Although $p(x)\geq1/10$ on $[0,1]$, the middle coefficient in this expansion
is negative, so the expansion does not certify nonnegativity. Elevate the basis
degree to $3$. Formula
\eqref{eq:bernstein-degree-elevation} gives
\[
\begin{aligned}
 c_0&=\frac7{20},\\
 c_1&=\frac13\frac7{20}
       +\frac23\left(-\frac3{20}\right)=\frac1{60},\\
 c_2&=\frac23\left(-\frac3{20}\right)
       +\frac13\frac7{20}=\frac1{60},\\
 c_3&=\frac7{20}.
\end{aligned}
\]
Therefore, the same polynomial has the representation
\[
 p(x)=\frac7{20}B_{0,3}(x)
      +\frac1{60}B_{1,3}(x)
      +\frac1{60}B_{2,3}(x)
      +\frac7{20}B_{3,3}(x).
\]
Now all coefficients are positive, and nonnegativity of $p$ follows directly
from nonnegativity of the basis polynomials. In a tensor-product basis, this
transformation is performed separately in each variable.

This is how the degrees in the last column of the first table are to be
understood. Expanding the expressions and applying
\eqref{eq:bernstein-coeff-formula} after the selected degree elevations gives
the following entirely rational verification.

\begin{center}
\small
\begin{tabular}{@{}lrrrr@{}}
\toprule
case & total & $>0$ & $=0$ & smallest $>0$\\
\midrule
(I) & 96 & 87 & 9 & $1/10$\\
(II) & 384 & 324 & 60 & $1/882$\\
(III) & 1225 & 1073 & 152 & $1/240$\\
(IV) & 3072 & 2642 & 430 & $1/1715$\\
(V) & 1715 & 1636 & 79 & $1/270$\\
\bottomrule
\end{tabular}
\end{center}

There are no negative coefficients, and the largest coefficient in all five
rows is $1$. Since the Bernstein basis polynomials are nonnegative on the unit
cube, it follows that $\Pi_j\geq0$ in all five cases. The complete rational
coefficient lists and two independent implementations of the exact
verification are provided in the supplementary materials; the implementations
produce bit-for-bit identical results.

Together with the separately analyzed pair of point masses, this proves
$\mathcal N(P,Q)\geq0$ for each pair of measures that can occur in
\eqref{eq:envelope-realize}. Consequently,
\eqref{eq:envelope-lower}--\eqref{eq:replacement-identity} give
\[
             \mathcal N(P_+,P_-)\geq0
\]
for arbitrary $P_+,P_-$. Together with the range $a\geq\tfrac12$ already
treated, this proves \eqref{eq:moment-cleared}. Dividing both sides by
$12a>0$ gives
\[
 \frac14\Var(\varepsilon R)+\frac1{12}\Var(\gamma(R))
 \geq\frac{a^2-b}{6a},
\]
which is exactly \eqref{eq:moment-main}. This proves
Lemma~\ref{lem:moment}.
\end{proof}

\begin{remark}[why the two ranges are different]
In the proof for $a\geq\tfrac12$, all transformations preceding the pointwise
inequality \eqref{eq:pointwise-large-a} use only $a>0$. The condition
$a\geq\tfrac12$ becomes essential in the substitution
\[
 t_{\mathrm B}=2a-1
\]
(this parameter is denoted simply by $t$ in the proof). Since
$a=\E R\leq1$, precisely in this range $t_{\mathrm B}\in[0,1]$, and the
difference between the sides of \eqref{eq:pointwise-large-a} becomes the
polynomial $\mathcal B(r,t_{\mathrm B})$ on the unit square. A single
Bernstein certificate therefore suffices.

For $0<a<\tfrac12$, this pointwise strategy is not merely unsupported by the
certificate; it is false. Already at $r=0$, the difference between the right-
and left-hand sides of \eqref{eq:pointwise-large-a} is
\[
 1-\left(\frac34+(1-a)^2\right)
 =\frac{(2a-1)(3-2a)}4<0.
\]
Thus the desired inequality emerges only after averaging: negative
contributions from some values of $R$ must be compensated by others, subject
to the prescribed means. The laws $P_+,P_-$ must therefore be retained rather
than replacing the problem by a single pointwise estimate.

Formally, the condition $0<a<\tfrac12$ enters the main proof when
Lemma~\ref{lem:conv-envelope} is applied. The key sign occurs in Step~4 of
that lemma: in factorization~\eqref{eq:left-chord-factor}, the last factor at
zero is bounded above by
\[
 2a-1<0.
\]
This excludes chords issuing from zero and, together with Steps~2--3, leaves at
most one interior or right chord. The admissibility condition for the right
chord,
\[
 2a+4at_{\mathrm{ch}}-1\leq0,\qquad t_{\mathrm{ch}}>0,
\]
where $t_{\mathrm{ch}}$ is its left endpoint, in turn forces
$a<\tfrac12$: this geometry of the envelope can occur only in the small-$a$
range.

The convex envelope is the exact device for handling averaging at fixed mean:
the minimum of the integral of $F_{a,u}$ is attained by a point mass or by a
mixture of chord endpoints. Thus the infinite-dimensional problem in
$P_+,P_-$ is reduced first to the geometry of the envelope and then to five
atomic families. After this reduction, the calculations in the present
subsection no longer use $a<\tfrac12$: the pair of point masses and the five
certificates are verified even on enlarged parameter cubes without the
tangency equations.

The difference in complexity is therefore structural. When
$a\geq\tfrac12$, a single pointwise majorant exists and the problem reduces to
one polynomial in two variables. For $0<a<\tfrac12$, such a majorant is
impossible, so one must use compensation between points, the convex envelope,
its chord classification, and five separate certificates. The same threshold
$2a-1=0$ separates the two mechanisms; the boundary value $a=\tfrac12$
belongs to the first case and corresponds to $t_{\mathrm B}=0$.
\end{remark}

\section{Supplementary proofs and materials}\label{app:supplement}

\subsection{Countability of maximal support segments}
\label{app:countable-support-segments}

The following fact is standard and is not new. Ewald, Larman, and Rogers
explicitly note in~\cite[p.~1]{EwaldLarmanRogers1970} that the maximal line
segments on the boundary of a planar convex region form an at most countable
family and that this is easy to prove. For completeness, we give the short
standard proof.

\begin{lemma}\label{lem:countable-support-segments}
A planar convex body has at most countably many maximal nondegenerate support
faces. Consequently, the set of directions $u\in S^1$ for which the support face
\[
 \mathcal F_K(u)\eq
 \{z\in K:\langle z,u\rangle
   =\max_{w\in K}\langle w,u\rangle\}
\]
is not a singleton is at most countable.
\end{lemma}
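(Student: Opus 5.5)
The plan is a length-summation argument. Each nondegenerate support face $\mathcal F_K(u)$ is a closed segment contained in $\partial K$. Two faces $\mathcal F_K(u)$ and $\mathcal F_K(u')$ with $u\neq u'$ meet in at most one point. Indeed, if they shared a nondegenerate subsegment with direction $w$, then both $u$ and $u'$ would be orthogonal to $w$. In the plane this forces $u'=-u$. But then $K$ would lie between the two parallel support lines $\langle z,u\rangle=h_K(u)$ and $\langle z,u\rangle=-h_K(-u)$. These lines coincide only if $K$ is contained in a line, which contradicts nonempty interior. Hence distinct maximal nondegenerate faces have pairwise intersections of zero length.

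The key step is then to bound their total length. Fix finitely many distinct nondegenerate faces $S_1,\dots,S_n$. Since they overlap only in points and all lie in the rectifiable curve $\partial K$, the additivity of arc length (one-dimensional Hausdorff measure) on $\partial K$ gives
\[
 \sum_{i=1}^n|S_i|\leq P(K)<\infty .
\]
Consequently, for each $k\geq1$ there are at most $kP(K)$ faces of length at least $1/k$. Every nondegenerate face has positive length, so it is counted for some $k$. The family is therefore a countable union of finite families, hence at most countable.

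For the second assertion, note that $\mathcal F_K(u)$ is always a compact convex subset of a line, that is, a point or a segment. It fails to be a singleton exactly when it is a nondegenerate face. Moreover, $u$ is determined by that face up to the ambiguity $u\mapsto-u$: the face spans a line, $u$ must be a unit normal to that line, and the first step shows that only one of the two normals can have this face. So the map $u\mapsto\mathcal F_K(u)$ is injective on the exceptional directions. Its image is contained in the countable family just found, so the exceptional set is at most countable.

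The main obstacle is justifying the length bound rigorously. I would use that $\partial K$ is a closed rectifiable curve of length $P(K)$ and that arc length restricted to $\partial K$ is a measure on which segments are measurable. Segments meeting only at endpoints then have disjoint interiors, and additivity applies.
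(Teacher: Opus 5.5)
Your proposal is correct and follows essentially the same route as the paper. Distinct nondegenerate faces overlap in a set of zero length; any finitely many of them have total length at most $P(K)$, so only finitely many have length at least $1/k$; and the faces correspond injectively to directions. Your explicit argument excluding $u'=-u$ via the nonempty interior spells out the step that the paper merely asserts as ``every maximal nondegenerate support face has a unique outer unit normal.''
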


\begin{proof}
Every nonsingleton support face of a planar convex body is a closed segment of
positive length. Let $\mathcal F$ be the family of all distinct such faces and
write $L(F)$ for the length of $F\in\mathcal F$. The relative interiors of
distinct faces in $\mathcal F$ are disjoint. Indeed, if their supporting lines
are distinct, an intersection of the relative interiors would contradict the
fact that the whole body lies on one side of each supporting line. If the lines
coincide, then their intersections with $K$, namely the support faces
themselves, coincide.

For each $n\in\mathbb N$ we set
\[
 \mathcal F_n\eq
 \{F\in\mathcal F:L(F)\geq 1/n\}.
\]
Since the relative interiors of these segments are pairwise disjoint subsets
of the rectifiable curve $\partial K$, every finite subfamily
$\mathcal G\subset\mathcal F_n$ satisfies
\[
 \frac{\#\mathcal G}{n}
 \leq \sum_{F\in\mathcal G}L(F)
 \leq P(K).
\]
Therefore $\mathcal F_n$ is finite. Since every segment of positive length
belongs to some $\mathcal F_n$, the family
$\mathcal F=\bigcup_{n\geq1}\mathcal F_n$ is at most countable.

Finally, every maximal nondegenerate support face has a unique outer unit
normal. Hence the directions $u$ for which $\mathcal F_K(u)$ is not a
singleton also form an at most countable set.
\end{proof}

\subsection{Projection densities of area and boundary length}
\label{app:projection-densities}

Retain the coordinates from the proof of
Proposition~\ref{prop:projection-reduction} and let
\[
 \pi(\ell t,y)\eq t
\]
be the normalized projection coordinate.

\subsubsection{Projection of a uniform measure of area}
\label{app:area-projection-density}

Let $I$ be uniformly distributed in $K$. For any Borel set
$A\subset[0,1]$, Fubini's theorem and the substitution $x=\ell t$ give
\[
\begin{aligned}
 \mathbb P\{\pi(I)\in A\}
 &=\frac1{|K|}
   \iint_K\mathbf 1_{\{x/\ell\in A\}}\,\dd x\,\dd y\\
 &=\frac1{|K|}
   \int_A\int_{v(t)}^{u(t)}\ell\,\dd y\,\dd t\\
 &=\int_A\frac{\ell\bigl(u(t)-v(t)\bigr)}{|K|}\,\dd t
  =\int_A\frac{\ell h(t)}{|K|}\,\dd t.
\end{aligned}
\]
Thus the projection density is
\[
 \rho_{\mathrm{in}}(t)=\frac{\ell h(t)}{|K|}.
\]
Its normalization also follows from the section formula
\[
 |K|=\ell\int_0^1h(t)\,\dd t.
\]

\subsubsection{Projection of normalized boundary length}
\label{app:boundary-projection-density}

Let $B$ be uniformly distributed on $\partial K$ with respect to arc length.
In the nonexceptional direction under consideration, the boundary, apart from
the two common extreme points, consists of the graphs
\[
 r_u(t)=(\ell t,u(t)),\qquad r_v(t)=(\ell t,v(t)).
\]
The standard arc-length formula for graphs of convex and concave functions
gives, almost everywhere,
\[
 \dd s_u=\lVert r_u'(t)\rVert\,\dd t
   =\sqrt{\ell^2+u'(t)^2}\,\dd t,\qquad
 \dd s_v=\sqrt{\ell^2+v'(t)^2}\,\dd t.
\]
Therefore, for any Borel $A\subset[0,1]$
\[
\begin{aligned}
 \mathbb P\{\pi(B)\in A\}
 &=\frac1{P(K)}
   \left(
    \int_A\sqrt{\ell^2+u'(t)^2}\,\dd t
    +\int_A\sqrt{\ell^2+v'(t)^2}\,\dd t
   \right)\\
 &=\int_A\frac{q(t)}{P(K)}\,\dd t.
\end{aligned}
\]
Thus,
\[
 \rho_{\mathrm{bd}}(t)=\frac{q(t)}{P(K)}.
\]
In particular,
\[
 P(K)=\int_0^1q(t)\,\dd t,
\]
which confirms the normalization. If one of the extreme support faces were a
nondegenerate vertical segment, its length would produce an atom at $t=0$ or
$t=1$. This is why such exceptional directions were excluded at the beginning
of the proof of Proposition~\ref{prop:projection-reduction}.

\subsection{Classical formula for the Gini mean difference}
\label{app:gini-cdf-proof}

For completeness, we give a short standard proof of
formula~\eqref{eq:gini-cdf}. It is not new; this is the classical layer-cake
argument written in terms of indicator functions and Tonelli's theorem, and is
equivalent to the derivation in
\cite[Section~2.1.2]{YitzhakiSchechtman2013}.

Let $X$ and $Y$ be independent and have distribution $\sigma$. For any
$v,w\in[0,1]$
\[
 |v-w|=\int_0^1
 \left|\mathds{1}_{\{v\leq x\}}-\mathds{1}_{\{w\leq x\}}\right|\dd x,
\]
since the integrand equals one precisely between $v$ and $w$. The integrand is
nonnegative, so Tonelli's theorem gives
\begin{align*}
 D(\sigma)
 &=\E|X-Y|\\
 &=\int_0^1\E
 \left|\mathds{1}_{\{X\leq x\}}-\mathds{1}_{\{Y\leq x\}}\right|\dd x\\
 &=\int_0^1\bigl(\mathbb P\{X\leq x<Y\}
                  +\mathbb P\{Y\leq x<X\}\bigr)\dd x\\
 &=2\int_0^1S(x)(1-S(x))\dd x.
\end{align*}
The last equality uses independence of $X,Y$ and the identity
$\mathbb P\{X\leq x<Y\}=S(x)(1-S(x))$. Finally,
\[
 S(x)(1-S(x))=\frac14-\left(S(x)-\frac12\right)^2,
\]
which gives the second equality in~\eqref{eq:gini-cdf}.

\subsection{Reproducible materials}

The source text, the complete list of $6492$ rational Bernstein coefficients,
two implementations of the exact verification, and the build logs are included
in the reproducibility package accompanying the paper. The SHA-256 checksum of
the combined report is
\begin{center}
\footnotesize\texttt{0e81ed97d1d9d69b840b436923e34f5821fbf49fda9199e8ae77e173d1e1ddbe}.
\end{center}

\section*{Author's note added to the English translation
(August 2026)}

The original Russian-language manuscript, \emph{The Zaporozhets--Tarasov
Inequality for an Arbitrary Planar Convex Body}, was completed on 26 July 2026
and was deposited in a private GitHub repository on the same date. The exact
state of the manuscript deposited at that time is preserved in commit
\texttt{17c5b853b52146f7960f286565a1492fb334424c}%
\footnote{\url{https://github.com/mkukushkin2004/mean-distance-inequality-proof/commit/17c5b853b52146f7960f286565a1492fb334424c}}.
The repository was made public on 13 August 2026.

The present English text is a faithful translation of the Russian manuscript.
No mathematical statement or proof from the original version has been altered.

A complementary higher-dimensional result was obtained by Alexey Lotnikov in
his Bachelor's thesis, \emph{Mean distance between points inside and on the
boundary of a convex body}, defended at Saint Petersburg State University on
15 June 2026. That thesis contains counterexamples to
the conjecture in every dimension $d\geq3$. The exact Russian version of the
thesis as defended has not yet been placed in a stable public archive.

During the preparation of the present English translation, the author became
aware of the independent preprint by Eric Shen, \emph{The
Zaporozhets--Tarasov Conjecture on Mean Distances}, arXiv:2608.06470v1, first
submitted on 6 August 2026. Shen's preprint contains both a
proof of the conjecture in dimension two and counterexamples in every dimension
$d\geq3$. The proof of the planar case given in the present manuscript was
obtained independently of Shen's work.

The present section, including the information about Lotnikov's original
Bachelor's thesis and Shen's preprint, was added in August 2026 and was not
part of the Russian manuscript completed in July 2026.

\clearpage
\printbibliography[title={References}]

@article{Khinchin1938,
  author       = {Khinchin, Aleksandr Ya.},
  title        = {On Unimodal Distributions},
  journaltitle = {Izvestiya Nauchno-Issledovatel'skogo Instituta Matematiki i Mekhaniki pri Tomskom Gosudarstvennom Universitete im. V. V. Kuibysheva},
  year         = {1938},
  volume       = {2},
  number       = {2},
  pages        = {1--7},
  note         = {In Russian},
  langid       = {english},
}

@book{KolmogorovFomin1972,
  author    = {Kolmogorov, Andrey N. and Fomin, Sergei V.},
  title     = {Elements of the Theory of Functions and Functional Analysis},
  edition   = {3},
  location  = {Moscow},
  publisher = {Nauka},
  year      = {1972},
  pagetotal = {496},
  note      = {In Russian},
  langid    = {english},
}

@book{YitzhakiSchechtman2013,
  author    = {Yitzhaki, Shlomo and Schechtman, Edna},
  title     = {The Gini Methodology: A Primer on a Statistical Methodology},
  series    = {Springer Series in Statistics},
  location  = {New York},
  publisher = {Springer},
  year      = {2013},
  langid    = {english},
}

@book{Gini1912,
  author    = {Gini, Corrado},
  title     = {Variabilità e mutabilità: contributo allo studio delle
               distribuzioni e delle relazioni statistiche},
  series    = {Studi economico-giuridici pubblicati per cura della Facoltà
               di Giurisprudenza della R. Università di Cagliari},
  number    = {Anno III, parte II},
  location  = {Bologna},
  publisher = {Tipografia di Paolo Cuppini},
  year      = {1912},
  langid    = {italian},
}

@article{Gini1914,
  author       = {Gini, Corrado},
  title        = {Sulla misura della concentrazione e della variabilità dei
                  caratteri},
  journaltitle = {Atti del Reale Istituto Veneto di Scienze, Lettere ed Arti},
  year         = {1914},
  volume       = {73},
  number       = {2},
  pages        = {1203--1248},
  langid       = {italian},
}

@article{BonnetEtAl2021,
  author       = {Bonnet, Gilles and Gusakova, Anna and Thäle, Christoph and Zaporozhets, Dmitry},
  title        = {Sharp inequalities for the mean distance of random points in convex bodies},
  journaltitle = {Advances in Mathematics},
  year         = {2021},
  volume       = {386},
  eid          = {107813},
  langid       = {english},
}

@article{Tokmachev2022,
  author       = {Tokmachev, Alexander S.},
  title        = {The Mean Distance between Random Points on the Boundary of a Convex Figure},
  journaltitle = {Zapiski Nauchnykh Seminarov POMI},
  year         = {2022},
  volume       = {510},
  pages        = {248--261},
  note         = {In Russian},
  langid       = {english},
}

@article{Lotnikov2025,
  author       = {Lotnikov, Alexey S.},
  title        = {The Mean Distance between Random Points in Centrally Symmetric Convex Bodies},
  journaltitle = {Zapiski Nauchnykh Seminarov POMI},
  year         = {2025},
  volume       = {544},
  pages        = {211--235},
  note         = {In Russian},
  langid       = {english},
}

@unpublished{Tokmachev2026,
  author = {Tokmachev, Alexander S.},
  title  = {Inequalities for convex functions of random points inside and on the boundary of convex bodies},
  year   = {2026},
  note   = {arXiv preprint 2607.08869},
  langid = {english},
}

@article{MichelucciFoufouKubicki2012,
  author       = {Michelucci, Dominique and Foufou, Sebti and Kubicki, Arnaud},
  title        = {On the Complexity of the Bernstein Combinatorial Problem},
  journaltitle = {Reliable Computing},
  year         = {2012},
  volume       = {17},
  number       = {1},
  pages        = {22--33},
  langid       = {english},
}

@book{Rockafellar1973,
  author     = {Rockafellar, R. Tyrrell},
  title      = {Convex Analysis},
  translator = {Ioffe, A. D. and Tikhomirov, V. M.},
  location   = {Moscow},
  publisher  = {Mir},
  year       = {1973},
  pagetotal  = {469},
  note       = {Russian translation},
  langid     = {english},
}

@article{EwaldLarmanRogers1970,
  author       = {Ewald, G. and Larman, D. G. and Rogers, C. A.},
  title        = {The Directions of the Line Segments and of the
                  {$r$}-Dimensional Balls on the Boundary of a Convex Body
                  in Euclidean Space},
  journaltitle = {Mathematika},
  year         = {1970},
  volume       = {17},
  number       = {1},
  pages        = {1--20},
  langid       = {english},
}

\end{document}